\DeclareMathOperator{\Map}{Map}
\theoremstyle{definition}
\newtheorem{theorem}{Theorem}[section]
\newtheorem{prop}[theorem]{Proposition}
\newtheorem{lemma}[theorem]{Lemma}
\newtheorem{defn}[theorem]{Definition}
\newtheorem{rmk}[theorem]{Remark}
\newtheorem{exam}[theorem]{Example}
\newtheorem{thm}[theorem]{Theorem}
\newtheorem{notation}[theorem]{Notation}
\newtheorem{subsec}[theorem]{}
\theoremstyle{remark}
{\swapnumbers
   \newtheorem{ack}[theorem]{Acknowledgements}}
\newcommand{\R}{{\mathbb R}}
\newcommand{\Q}{{\mathbb Q}}
\newcommand{\Hom}{\mathrm{Hom}}
\newcommand{\Top}{\mathsf{Top}}
\newcommand{\GTop}{G\Top}
\newcommand{\Sset}{\mathsf{sSet}}
\newcommand{\sM}{\mathsf{M}}
\newcommand\DD{{\mathcal D}}
\newcommand\FF{{\mathcal F}}
\newcommand\LL{{\mathcal L}}
\newcommand\MM{{\mathcal M}}
\newcommand\PP{{\mathcal P}}
\newcommand\PMF{{\PP\kern-2pt\MM\FF}}
\newcommand\PML{{\PP\kern-2pt\MM\LL}}
\newcommand{\fsubd}{\mathrel{{\scriptstyle\searrow}\kern-1ex^d\kern0.5ex}}
\newcommand{\bsubd}{\mathrel{{\scriptstyle\swarrow}\kern-1.6ex^d\kern0.8ex}}
\newcommand{\fsubeq}{\mathrel{\raise-.7ex\hbox{$\overset{\searrow}{=}$}}}
\newcommand{\bsubeq}{\mathrel{\raise-.7ex\hbox{$\overset{\swarrow}{=}$}}}
\newcommand{\tsh}[1]{\left\{\kern-.9ex\left\{#1\right\}\kern-.9ex\right\}}
\newcommand{\obj}{\mathrm{Obj}}
\title{Good Objects in the Equivariant World}
\author{Surojit Ghosh}
\address{Department of Mathematics, Indian Institute of Technology, Roorkee, Haridwar Road, 247667,India}
\email{surojitghosh89@gmail.com, surojit.ghosh@ma.iitr.ac.in}
\author{Bikramjit Kundu}
\address{Department of Mathematics, Indian Institute of Technology, Roorkee, Haridwar Road, 247667,India}
\email{bikramju@gmail.com, bikramjit.pd@ma.iitr.ac.in}
\subjclass[2020]{Primary: 55P91, 55P65; Secondary: 55N91,55P48}
\keywords{Equivariant Localization}
\begin{document}
\begin{abstract}
This article explores equivariant localization in the category of $G$-spaces, where $G$ is a compact Lie group. We establish a commutation rule for the localization functor and the equivariant loop functor. Additionally, we introduce and classify certain good objects in this category up to their Bredon cohomology with coefficients in the constant rational Mackey functor $\underline{\Q}$.
\end{abstract}

\maketitle

\section{Introduction}
The homotopical localization functor \cite{Bo77, Fa96, Hi03} exists with respect to a class of maps between cofibrant spaces in the category of topological spaces or simplicial spaces, which are cofibrantly generated under the classical model structure. Specifically:

\begin{defn}
A space $X$ in these categories is said to be $f$-local if $X$ is fibrant and the map induces weak homotopy equivalences:
\[
\Map(B, X) \simeq \Map(A, X),
\]
where $f: A \to B$ is a map between cofibrant objects.
\end{defn}

The continuous version of the small object argument \cite{Ho99} ensures the existence of the localization functor. We have the following theorem:

\begin{theorem}\cite[Theorem A.3]{Fa96}
For any map $f: A \to B$ between cofibrant objects, there exists a localization functor $L_f$ that is coaugmented and homotopically idempotent. The map $X \to L_f(X)$ is universal with respect to $f$-local spaces.
\end{theorem}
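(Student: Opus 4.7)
The plan is to apply the small object argument to a suitable enlargement of $f$ into a set of generating cofibrations. Set $\Lambda(f) = \{f \Box i_n \mid n \geq 0\}$, where $i_n : S^{n-1} \hookrightarrow D^n$ is the standard boundary inclusion and $\Box$ denotes the pushout-product
\[
f \Box i_n : (A \times D^n) \cup_{A \times S^{n-1}} (B \times S^{n-1}) \to B \times D^n.
\]
Each member of $\Lambda(f)$ is a cofibration between cofibrant spaces. The key translation, via the exponential adjunction between pushout-product and mapping space, is that $X$ is $f$-local in the sense of the preceding definition if and only if $X \to \ast$ has the right lifting property against every map in $\Lambda(f)$: the family $\{f \Box i_n\}_{n\geq 0}$ detects precisely when the restriction $\Map(B,X) \to \Map(A,X)$ is a weak equivalence, because a map of spaces is a weak equivalence if and only if it has the lifting property against $\{S^{n-1} \hookrightarrow D^n\}_{n \geq 0}$.

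Next I would run the transfinite small object argument on $\Lambda(f)$. Fix a regular cardinal $\lambda$ strictly larger than the cardinality of chosen cell structures on $A$ and $B$, so that the sources of maps in $\Lambda(f)$ are $\lambda$-small relative to the cellular inclusions produced by the construction. Define inductively $X_0 = X$; form $X_{\alpha+1}$ from $X_\alpha$ by a single pushout attaching, for every commutative square from every map in $\Lambda(f)$ to $X_\alpha \to \ast$, the corresponding target cell; and take colimits at limit ordinals. Set $L_f(X) := X_\lambda$. The construction is functorial in $X$, the natural map $X \to L_f(X)$ is a transfinite composition of cofibrations (hence is the desired coaugmentation), and $\lambda$-smallness of the sources ensures that every lifting problem against $L_f(X) \to \ast$ is already solved at some earlier stage, so $L_f(X)$ is $\Lambda(f)$-injective, hence $f$-local.

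To obtain universality and idempotence, suppose $Z$ is $f$-local and $g : X \to Z$ is any map. One extends $g$ inductively over the tower: at each successor stage the extension problem reduces to solving a lifting problem for a map in $\Lambda(f)$ against $Z \to \ast$, which has a solution since $Z$ is $\Lambda(f)$-injective; at limit stages one passes to the colimit. This produces a factorization $L_f(X) \to Z$, unique up to homotopy, recovering $g$ up to homotopy. Homotopical idempotence follows by taking $Z = L_f(X)$: the coaugmentation $L_f(X) \to L_f(L_f(X))$ and the factorization of $\mathrm{id}_{L_f(X)}$ through it exhibit each as a homotopy inverse of the other.

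The main technical obstacle is the cardinality bookkeeping: one must verify that a single regular cardinal $\lambda$ suffices both for the $\lambda$-smallness of the sources of $\Lambda(f)$ relative to sequential cell attachments and for the closure of the tower. This is exactly where the \emph{continuous} version of the small object argument cited before the theorem is needed, rather than the set-theoretic one: in the topological setting the family of squares to be filled at each stage is parametrized by mapping spaces carrying their compact-open topology, and one must ensure that the attaching data at stage $\alpha+1$ varies continuously, so that colimits interact correctly with $\Map(B,-)$ and $\Map(A,-)$ at the terminal stage $\lambda$. Granting this, all four claimed properties (existence, functoriality, coaugmentation, and homotopical idempotence, together with the universal property) follow from the construction.
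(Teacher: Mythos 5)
The paper offers no proof of this statement---it is quoted verbatim as a citation to [Fa96, Theorem A.3]---so your proposal can only be compared with the construction in that reference, which is in fact exactly the route you take: Farjoun builds $L_f$ by transfinitely attaching the horns $f\mathbin{\Box}i_n$ via a continuous small object argument, and your outline reproduces this, including a correct identification of where the real technical content sits (continuity of the attachment data, so that the functor is genuinely continuous and interacts with $\Map(A,-)$ and $\Map(B,-)$ at the limit stage).

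One step, however, is stated incorrectly, and the repair is a standard move you omitted. Having the right lifting property against $\{S^{n-1}\hookrightarrow D^n\}_{n\ge 0}$ characterizes \emph{trivial fibrations}, not weak equivalences; so ``$X$ is $f$-local iff $X\to *$ has the RLP against $\Lambda(f)$'' is not the adjoint transcription of the definition of locality. Under the exponential adjunction, $\Lambda(f)$-injectivity of $X\to *$ says that $\Map(B,X)\to\Map(A,X)$ is a trivial fibration, which is a priori strictly stronger than being a weak equivalence. To close this gap one must first replace $f$ by a cofibration between cofibrant objects (e.g.\ the mapping cylinder inclusion $A\to M_f$, which does not change the class of $f$-local spaces up to homotopy); this is also what is needed to justify your unproved assertion that each $f\mathbin{\Box}i_n$ is a cofibration, since the pushout-product of a mere map with $i_n$ need not be one. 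Once $f$ is a cofibration and $X$ is fibrant (automatic in $\Top$), the pushout-product axiom makes $\Map(B,X)\to\Map(A,X)$ a fibration, so ``weak equivalence'' and ``trivial fibration'' coincide and your characterization becomes correct. With that repair, the tower construction, the extension of maps into local targets, the homotopy uniqueness of the factorization (obtained by lifting against the next horn up), and the idempotence argument all go through as you describe.
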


The localisation functor commutes with the product and behaves well with mapping objects $\Map(X,T)$ with $T$ being $f$-local and $X$ being cofibrant object. In the later case the localisation functor takes the mapping object to a weakly equivalent $f$-local object. The based loop spaces being the most common as well as important mapping object naturally becomes of particular interest in this context. It turns out localisation functor $L_f$ takes the based loop object $\Omega X$ to a local object irrespective choices of $X$. Using `Segal loop machine'\cite{Se72} one can prove the following:
\begin{thm}\cite[\S 3.A]{Fa96}
For any space $X$ and map $f: A \to B$,
\[
L_f \Omega X \simeq \Omega L_{\Sigma f} X.
\]
\end{thm}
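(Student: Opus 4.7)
The plan is to exhibit $\Omega L_{\Sigma f} X$ as a model for $L_f \Omega X$ by checking both sides satisfy the same universal property, then produce inverse comparison maps.

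First, I would show that $\Omega L_{\Sigma f} X$ is $f$-local. Using the classical loop--suspension adjunction $\Map(\Sigma A, Y) \simeq \Map(A, \Omega Y)$, the restriction map
\[
\Map(B, \Omega L_{\Sigma f} X) \longrightarrow \Map(A, \Omega L_{\Sigma f} X)
\]
is naturally identified with
\[
\Map(\Sigma B, L_{\Sigma f} X) \longrightarrow \Map(\Sigma A, L_{\Sigma f} X),
\]
which is a weak equivalence because $L_{\Sigma f} X$ is $\Sigma f$-local by construction. The coaugmentation $X \to L_{\Sigma f} X$ then induces $\Omega X \to \Omega L_{\Sigma f} X$, and by the universal property of $L_f$ this factors through a natural map
\[
\alpha \colon L_f \Omega X \longrightarrow \Omega L_{\Sigma f} X.
\]

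Second, to construct an inverse, I would invoke Segal's loop machine. The space $\Omega X$ carries a group-like $A_\infty$-monoid structure, and since (as noted in the paragraph preceding the theorem) $L_f$ commutes with finite products up to weak equivalence, $L_f \Omega X$ inherits a group-like $A_\infty$-monoid structure. Segal's machine then produces a classifying space $BL_f \Omega X$ with $\Omega BL_f \Omega X \simeq L_f \Omega X$. A computation parallel to the first step, again via the loop--suspension adjunction, shows that $BL_f \Omega X$ is $\Sigma f$-local (using that $L_f \Omega X$ is $f$-local). Now the natural map $X \simeq B\Omega X \to BL_f \Omega X$ factors through $L_{\Sigma f} X$ by universality, and taking $\Omega$ yields
\[
\beta \colon \Omega L_{\Sigma f} X \longrightarrow \Omega B L_f \Omega X \simeq L_f \Omega X.
\]

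Finally, I would verify that $\alpha$ and $\beta$ are mutually inverse by checking that both composites agree with the identity after precomposing with the relevant coaugmentation maps and invoking the universal properties on each side. The main obstacle is the middle step: one must confirm that $L_f$ preserves enough of the $A_\infty$-monoid structure on $\Omega X$ for Segal's machine to apply and deloop $L_f \Omega X$. This rests on the commutation of $L_f$ with finite products up to homotopy, together with a connectivity argument (either restricting to connected $X$ so that $B\Omega X \simeq X$, or carefully handling the component of the basepoint) to ensure that the delooping actually recovers $L_{\Sigma f} X$.
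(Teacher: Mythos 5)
Your proposal follows essentially the same route as the paper (and as Farjoun's original argument): the forward map comes from observing that $\Omega L_{\Sigma f}X$ is $f$-local via the loop--suspension adjunction, and the reverse map comes from delooping $L_f\Omega X$ with Segal's machine (the paper's Proposition \ref{CAL} plays exactly this role in the equivariant setting), checking that the delooping is $\Sigma f$-local, and invoking universality. The connectivity caveat you flag at the end is a real point, and the paper handles it by assuming $G$-connectedness in the delooping step; otherwise your argument matches the paper's.
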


This theorem establishes a certain commutation like rule for $L_f$ and the loop functor $\Omega$. A natural question arises here. Does this rule hold in the category of $G$-topological spaces for a compact Lie group $G$? The existence of the equivariant localization functor $L_f^G$ [Theorem \ref{Glo}] is guaranteed by Chorny's work \cite{Ch05}. Our first result answers this question affirmatively.

\begin{theorem}\label{CL}
Let $X$ be a $G$-space. Then for any $G$-map $f: A \to B$,
\[
L_f^G \Omega X \simeq_G \Omega L_{\Sigma f}^G X.
\]
\end{theorem}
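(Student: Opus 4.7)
The plan is to adapt Farjoun's non-equivariant argument \cite[\S 3.A]{Fa96} to the $G$-equivariant setting, working throughout in based $G$-spaces. The first task is to produce a canonical comparison $G$-map, and the second is to show it is a $G$-weak equivalence via an equivariant analogue of the Segal loop machine.

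To construct the comparison, I would use the $G$-equivariant loop--suspension adjunction
\[
\Map^G(Z, \Omega Y) \simeq_G \Map^G(\Sigma Z, Y),
\]
applied with $Z = A$ and $Z = B$, which shows that $\Omega$ carries $\Sigma f$-local $G$-spaces to $f$-local $G$-spaces. In particular $\Omega L_{\Sigma f}^G X$ is $f$-local, so looping the coaugmentation $X \to L_{\Sigma f}^G X$ gives a map $\Omega X \to \Omega L_{\Sigma f}^G X$ which factors, by the universal property from Theorem \ref{Glo}, through a canonical $G$-map
\[
\phi\colon L_f^G \Omega X \longrightarrow \Omega L_{\Sigma f}^G X.
\]

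To prove $\phi$ is a $G$-weak equivalence, I would apply an equivariant bar construction to the $G$-topological monoid $\Omega X$. Concretely, apply $L_f^G$ levelwise to the Segal simplicial $G$-space $[n] \mapsto (\Omega X)^n$; using that $L_f^G$ commutes with finite products up to $G$-weak equivalence, its geometric realisation is $BL_f^G\Omega X$. A dual application of the loop--suspension adjunction shows that $B$ converts $f$-local group-like $G$-monoids into $\Sigma f$-local $G$-spaces, so $BL_f^G\Omega X$ is $\Sigma f$-local. Combining this with the equivariant delooping $B\Omega X \simeq_G X$ on the relevant basepoint components, the universal property of $L_{\Sigma f}^G$ identifies $L_{\Sigma f}^G X$ with $BL_f^G\Omega X$ up to $G$-weak equivalence; looping this identification inverts $\phi$.

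The main obstacle is installing the equivariant Segal machinery for a compact Lie group $G$: one needs Shimakawa-style $\Gamma$-$G$-spaces or the equivariant May recognition principle to justify $B\Omega X \simeq_G X$, which typically requires working componentwise over the orbit category so that $X^H$ is path-connected for each closed $H \leq G$. A second, more subtle point is the commutation of $L_f^G$ with finite products, which should follow from the cellular cofibrant generation underlying Chorny's construction \cite{Ch05}, though it merits explicit verification in our setting. Once these structural facts are in place, the argument reduces to the formal chain $\Omega L_{\Sigma f}^G X \simeq_G \Omega B L_f^G \Omega X \simeq_G L_f^G \Omega X$, mirroring the non-equivariant proof.
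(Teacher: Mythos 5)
Your proposal is correct and follows essentially the same route as the paper: construct the forward map from the $f$-locality of $\Omega L_{\Sigma f}^G X$, then deloop $L_f^G\Omega X$ via the equivariant Segal/Shimakawa machine applied to the levelwise-localized simplicial $G$-space $[n]\mapsto (\Omega X)^n$ (the paper's Proposition \ref{CAL}), observe the delooping is $\Sigma f$-local, and invert by the universal property. The two structural inputs you flag --- Shimakawa's $\Gamma_G$-space recognition principle and the commutation of $L_f^G$ with finite products --- are exactly the ones the paper establishes beforehand (Theorem \ref{eqsem} and the proposition on properties of $L_f^G$).
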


We also investigate the behavior of based mapping objects under the localization functor. We introduce the following definition:

\begin{defn}\label{GO}
A connected, based CW-complex $A$ is called $L$-good if for any map $f$ and space $X$ in $\Top_*$,
\[
L_f \Map_*(A, X) = \Map_*(A, Y)
\]
for some $Y \in \Top_*$.
\end{defn}

The characterization of $L$-good spaces was posed by Farjoun \cite[\S 9.f]{Fa96} and was settled by Badzioch and Dorabia\l a \cite{BaDo10} up to rational homotopy. We extend this notion to the equivariant setting:

\begin{defn}
A connected, based $G$-CW complex $A$ is $L^G$-good if for any based $G$-map $f$ and $X \in \GTop_*$,
\[
L_f^G \Map_*(A, X) \simeq_G \Map_*(A, Y)
\]
for some $G$-simply connected $Y \in \GTop_*$.
\end{defn}

In the non-equivariant case, all the k-dimensional spheres $S^k$ for $k \geq 1$ are $L$-good. This follows from Theorem \ref{CL}. The functor $L_f$ also preserves finite products up to weak equivalence, making all finite wedges of spheres $\bigvee_l S^k$ for $k \geq 1$, $l \geq 0$, $L$-good objects. Moreover, in \cite{BaDo10}, the authors showed that:

\begin{thm}\cite[Corollary 1.2]{BaDo10}
If $A$ is a simply connected, finite $L$-good space, then $A$ has the rational homotopy type of $\bigvee_l S^k$ for some $k \geq 1$, $l \geq 0$.
\end{thm}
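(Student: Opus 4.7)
The goal is to identify the rational homotopy type of $A$ as $\bigvee_l S^k$. Equivalently, I would show that $H^*(A;\Q)$ is concentrated in degrees $0$ and $k$ for some single $k\geq 1$; once this is established, no Massey products can be defined for degree reasons, so $A_\Q$ is automatically formal and hence rationally equivalent to $\bigvee_l S^k$ with $l = \dim_\Q H^k(A;\Q)$, which is finite because $A$ is.

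First I would record the easy ``wedges of spheres are $L$-good'' direction, both for sanity and to pin down the target class. Iterating Theorem \ref{CL} gives $L_f\Map_*(S^k,X) = L_f\Omega^k X \simeq \Omega^k L_{\Sigma^k f}X = \Map_*(S^k, L_{\Sigma^k f} X)$, so every $S^k$ is $L$-good. Since $L_f$ preserves finite products up to weak equivalence and $\Map_*(\bigvee_l S^k,X) \simeq \prod_l \Map_*(S^k,X)$, finite equidimensional wedges are $L$-good as well. Next I would reduce to rational homotopy: because rationalization is itself a localization that commutes appropriately with $\Map_*(A,-)$, the $L$-good property descends to $A_\Q$ in the rational category, so from here on I may work with Sullivan minimal models $(\Lambda V, d)$.

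The core of the argument would be to probe $A$ with Eilenberg--MacLane targets. For $X = K(\Q,n)$ the mapping space $\Map_*(A, K(\Q,n))$ is a generalized Eilenberg--MacLane space with $\pi_i \cong H^{n-i}(A;\Q)$. Apply $L$-goodness with carefully chosen localizing maps $f$ that kill or separate individual rational homotopy groups --- for instance Postnikov-type nullifications with respect to rational Moore spaces in specific dimensions. The resulting equation $L_f \Map_*(A, K(\Q,n)) \simeq \Map_*(A,Y)$ forces the rational cohomology of $A$ to fit into a rigid family of mapping spaces indexed by $Y$. Running the probe over varying $n$ and $f$ and comparing homotopy groups on both sides through the Sullivan model then yields $H^i(A;\Q) = 0$ whenever $i \neq 0, k$ for a common $k$.

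The hard step is the final concentration claim: one must rule out the existence of nonzero rational cohomology classes in two distinct positive degrees $j < k$ simultaneously. Heuristically, cup products between classes in degree $j$ and degree $k$ produce an obstruction to expressing a generic $L_f \Map_*(A,X)$ as a further mapping space out of $A$; but converting this heuristic into a contradiction requires an explicit localizing map $f$ whose effect on $\Map_*(A,X)$ provably cannot lie in the essential image of $\Map_*(A,-)$. This is the content of the Badzioch--Dorabia\l a proof and is where the main technical work lies; the preceding reductions to the rational category and to cohomology concentration are comparatively formal.
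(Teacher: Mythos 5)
Your skeleton is the right one---probe $A$ with targets $K(\Q,m)$, use the generalized Eilenberg--MacLane decomposition $\pi_i\Map_*(A,K(\Q,m))\cong H^{m-i}(A;\Q)$, truncate by a sphere nullification, and finish by noting that rational cohomology concentrated in degrees $0$ and $k$ forces the rational homotopy type of $\bigvee_l S^k$ (Hurewicz in the lowest nonvanishing degree; no formality discussion is really needed). But the proposal stops exactly where the proof lives. You name the exclusion of two nonzero positive-degree cohomology groups as ``the hard step'' and offer only a heuristic that cup products between degrees $j$ and $k$ obstruct $L_f\Map_*(A,X)$ from lying in the image of $\Map_*(A,-)$. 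That heuristic points in the wrong direction: the Badzioch--Dorabia\l a argument (whose equivariant version is carried out in Proposition \ref{chl} of this paper---note the statement itself is only quoted from \cite{BaDo10}, so that proposition is the closest in-text comparison) never uses the ring structure of $H^*(A;\Q)$ at all.

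The actual mechanism is as follows. Suppose $H^j(A;\Q)\neq 0\neq H^k(A;\Q)$ with $0<j<k$ and $k$ the top nonvanishing degree. For large $m$, nullification by $f\colon S^{m-k+1}\to *$ kills all homotopy of the product $\Map_*(A,K(\Q,m))\simeq\prod_i K(H^{m-i}(A;\Q),i)$ above degree $m-k$, leaving the single factor $K(H^{k}(A;\Q),m-k)$. If $A$ were $L$-good this would be $\Map_*(A,Y)$ with $Y$ simply connected, and $Y$ may be replaced by its rationalization. Then $\Omega Y$ is a rational H-space, hence splits as a product $\prod_i K(N^i,i)$ of Eilenberg--MacLane spaces; some $N^{n_0}$ with $n_0$ large must be nonzero (else $\Map_*(A,\Omega Y)$ could not realize $K(H^k(A;\Q),m-k-1)$ for $m$ large). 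Consequently $\Omega\Map_*(A,Y)\simeq\Map_*(A,\Omega Y)$ contains the factor $\Map_*(A,K(N^{n_0},n_0))$ and so has nonzero homotopy in the two distinct degrees $n_0-j$ and $n_0-k$, whereas $K(H^k(A;\Q),m-k-1)$ has homotopy in only one degree. The contradiction comes from the Eilenberg--MacLane splitting of rational H-spaces, not from products in cohomology; as written, your argument has a genuine gap at precisely this point, and the cup-product route would additionally fail for spaces with trivial rational cup products that are nevertheless not wedges of equidimensional spheres.
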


In this article, we classify $L^{C_{p^n}}$-good $C_{p^n}$-CW complexes up to their Bredon cohomology. We have the following theorem, which is a direct consequence of Proposition \ref{chl}.

\begin{thm}
If a finite, connected $C_{p^n}$-CW complex $A$ is $L^{C_{p^n}}$-good, then $H^r_{C_{p^n}}(A;\underline{\Q}) = 0$ for all $r \notin \{0, k\}$ for some $k > 0$, where $\underline{\Q}$ is the constant rational Mackey functor.
\end{thm}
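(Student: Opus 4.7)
My plan is to adapt the strategy of Badzioch--Dorabia\l a \cite{BaDo10} to the equivariant setting, using Theorem~\ref{CL} as the key technical input and exploiting the identification
\[
\pi_j^H \Map_*(A, K(\underline{\Q}, n)) \cong H^{n-j}_H(A; \underline{\Q})
\]
valid at every subgroup $H \leq C_{p^n}$, which encodes Bredon cohomology of $A$ in the equivariant homotopy of a mapping space.

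The first step is to probe the $L^{C_{p^n}}$-goodness of $A$ by taking $X = K(\underline{\Q}, n)$ together with a $G$-map $f$ whose localization $L_f^G$ realizes an equivariant rationalization (or a Postnikov truncation), which exists by Theorem~\ref{Glo}. The hypothesis then gives $L_f^G \Map_*(A, X) \simeq_G \Map_*(A, Y)$ for some $G$-simply connected $Y$. Matching equivariant homotopy groups at every subgroup $H$ converts this into identities between Bredon cohomology groups of $A$ and the equivariant mapping-space homotopy into $Y$. Iterating over $n$ and using Theorem~\ref{CL} to control the interplay of $\Omega$ with the localization produces strong constraints: rationally, the equivariant Postnikov tower of $Y$ can be supported in a single degree only, forcing $H^r_{C_{p^n}}(A; \underline{\Q})$ to vanish outside $\{0, k\}$ for some $k > 0$.

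The main difficulty is establishing this single-degree rigidity of $Y$. A priori its rational equivariant Postnikov tower could have several nontrivial stages, and different subgroup levels of $A$ might contribute different candidate degrees $k_H$. The argument must exploit the structure of $C_{p^n}$: its subgroup lattice is a chain $1 < C_p < \cdots < C_{p^n}$, rational Mackey functors split along orbit types, and restriction maps between fixed-point data behave well. Reducing to each subgroup level and invoking the non-equivariant \cite[Corollary 1.2]{BaDo10} forces a rational wedge-of-spheres structure on the relevant fixed-point or orbit spaces; the cyclic structure then pins down a common degree $k$. I anticipate that Proposition~\ref{chl} carries out precisely this subgroup induction, with the present theorem reading off the cohomological consequence via the fact that a wedge of $k$-spheres has rational cohomology concentrated in degrees $0$ and $k$.
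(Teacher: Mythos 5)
Your opening moves coincide with the paper's: the statement is indeed the contrapositive of Proposition~\ref{chl}, and that proposition is proved by probing goodness with $X=K(\underline{\Q},m)$, choosing $f\colon S^{\rho}\to *$ so that $L^{C_{p^n}}_f$ acts as an equivariant Postnikov truncation, and using the identification $\pi^P_i\Map_*(A,K(\underline{\Q},m))\cong H^{m-i}_P(A;\underline{\Q})$ (Theorem~\ref{mapeilen}) to turn the truncated mapping space into a single Eilenberg--Mac\,Lane space $K(\underline{M}^r,m-r)$. Up to that point your outline is the paper's argument.

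The gap is in your final step, the ``single-degree rigidity.'' You propose to pass to fixed points and invoke the non-equivariant classification of \cite[Corollary 1.2]{BaDo10} on each $A^H$, letting the subgroup chain of $C_{p^n}$ pin down a common degree. This does not go through: the $H$-fixed points of the equivariant mapping space $\Map_*(A,X)$ are the $H$-equivariant maps $A\to X$, not $\Map_*(A^H,X^H)$, so $L^{C_{p^n}}$-goodness of $A$ gives no statement of the form ``$A^H$ is $L$-good'' to which the non-equivariant theorem could be applied. Moreover, Bredon cohomology with coefficients in $\underline{\Q}$ is not the rational cohomology of any single fixed-point space, so even a wedge-of-spheres conclusion for each $A^H$ would not yield the asserted vanishing of $H^*_{C_{p^n}}(A;\underline{\Q})$. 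The paper closes the argument differently and genuinely equivariantly: assuming two nonvanishing positive degrees $r>s>0$, it writes the candidate $Y$ as rational, applies Triantafillou's splitting theorem \cite[Theorem 1.2]{Tr83} to the rational Hopf $C_{p^n}$-space $\Omega Y$ to get $\Omega Y\simeq_{C_{p^n}}\prod_i K(\underline{N}^i;i)$, and then observes that $\Map_*(A,\Omega Y)$ must have nontrivial equivariant homotopy in at least two degrees (one for each of $r$ and $s$), contradicting its being the single Eilenberg--Mac\,Lane space $K(\underline{M}^r,N-1)$. This use of \cite{Tr83} is exactly why the result is restricted to $G=C_{p^n}$; your outline would need to replace the fixed-point reduction with this (or an equivalent) equivariant splitting input to be complete.
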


\begin{rmk}
We restrict our attention to $C_{p^n}$-CW complexes because our method relies on \cite[Theorem 1.2]{Tr83}.
\end{rmk}

\begin{notation}
\begin{itemize}\label{not}

\item $\Top$ (respectively, $\Top_{*}$ denotes the category of topological spaces (respectively, based topological spaces) and morphisms are continuous maps which will be denoted by $\Map(X,Y)$ (respectively, based continuous maps by $\Map_{*}(X, Y)$ ). The simplicial mapping complex between two objects $X, Y$ will be denoted by $\mbox{Hom}(X,Y)$ with $\Map(X\times |\Delta|^n, Y)$ as $n$-simplices.\\

\item $\Top^G$ (respectively, $\Top_{*}^G$) denotes the category of (based) $G$-spaces and for $X, Y \in \obj(\Top^G)$ (respectively,  $\obj(\Top_\ast^G)$) the morphism set $\Map (X,Y)$ (respectively, $\Map_\ast(X,Y)$) is the set of (based) continuous maps.  This category is enriched over itself where $G$ acts on morphism sets via conjugation. The simplicial enrichment is given by $\mbox{Hom}(X,Y)_n=\Map(X\times |\Delta|^n, Y)$ where $|\Delta|^n$ denotes the geometric realization of the standard simplicial complex.\\

\item $\GTop$ (respectively, $\GTop_{*}$) denotes the category of topological spaces with $G$-action (respectively, based topological spaces) and morphism sets are $G$-equivariant maps which will be denoted by $\mbox{GMap}(X,Y)$. Note that $\GTop$ (respectively $\GTop_*$) is subcategory of $\Top^G$ (respectively $\Top^G_*$). The simplicial mapping complex between two objects $X, Y$ will be denoted by $\mbox{GHom}(X,Y)_n=\mbox{GMap}(X\times |\Delta|^n, Y)$ where the action on the geometric realization of the standard simplicial complex is trivial. We will denote $G$-homotopy equivalence by $\simeq_G$.\\

\item  $C\Top_*^G$ denote the full sub category of $\Top^G_*$ whose objects are non-degenerate based $G$-spaces having the $G$-homotopy type of based $G$-CW complexes.
    
\end{itemize}

\end{notation}
\begin{ack}
The second author wish to thank David White and Ang\'{e}lica M. Osorno for some helpful conversations. The second author also wants to thank Isaac Newton Institute for Mathematical Sciences, Cambridge, for support and hospitality during the programme Topology, representation theory and higher structures, when the work on this paper was undergoing.
\end{ack}

\section{Equivariant Localization and Equivariant \texorpdfstring{$\Gamma$}{Gamma}-Spaces}

\subsection{Equivariant Localization}
Throughout the rest of the article, $G$ will denote a compact Lie group. Chorny constructed the localization functor in the equivariant category of diagrams \cite{Ch05}, which are not necessarily cofibrantly generated. We first recall some basic definitions associated with it. After that, we will apply a generalized version of Elmendorf's theorem to translate the localization functor to the category of based $G$-spaces.

Let $S$ denote $\Top$ or $\Top_*$, and let $D$ denote a small category enriched over $S$. Then $S^D$, the category of $D$-shaped diagrams over the spaces $S$, is a simplicial category whose objects are continuous functors from $D$ to $S$, and whose morphisms are simplicial sets defined by $\Hom(X, Y)_n$.
The category $S^D$ has a projective model structure, meaning weak equivalences and fibrations are defined objectwise, and cofibrations are maps that have the left lifting property with respect to trivial fibrations.

\begin{defn}\label{or}\cite{DK83}
A set of objects $\{O_e\}_{e \in E}$ in an arbitrary category $M$ is called orbits for $M$ if they satisfy the following axioms:
\begin{itemize}
\item[A0.] $M$ is closed under arbitrary direct limits.
\item[A1.] For every $O_e$, the functor $\Map(O_e, *): M \to \Sset$ commutes up to homotopy with the pushout diagram:
\[
\xymatrix{
O_{e^{\prime}} \otimes K \ar[r] \ar[d] & X_a \ar[d] \\
O_{e^{\prime}} \otimes L \ar[r] & X_{a+1}
},
\]
where $K \hookrightarrow L$ is the inclusion of finite simplicial sets.
\item[A2.] For every $O_e$, the functor $\Map(O_e, *): M \to \Sset$ commutes up to homotopy with transfinite compositions of maps $X_a \to X_{a+1}$ as in A1.
\item[A3.] There is a limit ordinal $\lambda$ such that for every $O_e$, the functor $\Map(O_e, *)$ strictly commutes with $\lambda$-transfinite compositions of maps $X_a \to X_{a+1}$ as in A1.
\end{itemize}
\end{defn}

\begin{thm}\cite{DK83}
Let $M$ be a simplicial category satisfying axioms MO and SMO of Quillen \cite{Qu67}, and let $\{O_e\}_{e \in E}$ be a set of orbits for $M$. Then $M$ admits a closed simplicial model category structure in which the simplicial structure is the given one, and a map $X \to Y$ in $M$ is a weak equivalence or a fibration if and only if, for every object $O_e$, the induced map $\Map(O_e, X) \to \Map(O_e, Y)$ is so. The cofibrations are the maps that have the left lifting property with respect to trivial fibrations.
\end{thm}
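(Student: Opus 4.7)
The plan is to follow the standard recognition principle for cofibrantly-generated simplicial model categories, with the orbit set $\{O_e\}_{e\in E}$ supplying the generating (trivial) cofibrations. Define the classes directly: a map $X\to Y$ is a \emph{weak equivalence} (resp.\ \emph{fibration}) if $\Map(O_e,X)\to\Map(O_e,Y)$ is a weak equivalence (resp.\ Kan fibration) of simplicial sets for every $e\in E$, and declare \emph{cofibrations} to be the maps with the left lifting property against trivial fibrations. Axiom A0 together with Quillen's MO guarantees the existence of all the (co)limits we need, and the 2-out-of-3 and retract axioms MC2 and MC3 follow immediately by transporting the analogous statements from $\Sset$ along the functors $\Map(O_e,-)$.

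The first technical step is to set up the generating sets $I=\{O_e\otimes\boundary\Delta^n\hookrightarrow O_e\otimes\Delta^n\}$ and $J=\{O_e\otimes\Lambda^n_k\hookrightarrow O_e\otimes\Delta^n\}$. By the standard adjunction $\Map(O_e\otimes K,-)\cong\Hom_{\Sset}(K,\Map(O_e,-))$, the $I$-injectives are exactly the trivial fibrations and the $J$-injectives are exactly the fibrations in our tentative structure. Next I would run the small object argument to produce the two functorial factorizations required by MC5, building each factorization as a transfinite pushout-of-coproducts tower $X=X_0\to X_1\to\cdots\to X_\lambda$. This is the step where Axioms A1--A3 do all the work: A1 tells us that $\Map(O_e,-)$ commutes up to homotopy with the relevant pushouts, A2 extends this to transfinite compositions, and A3 fixes a specific ordinal $\lambda$ for which this commutation is strict, so that the small object argument actually terminates at a space which is $J$-injective (or $I$-injective) on the nose.

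With factorizations in hand, the remaining piece of MC4 and MC5 is to show that maps built as transfinite compositions of pushouts of elements of $J$ are in fact weak equivalences. This is the step I expect to be the main obstacle. The argument is: each generator $O_e\otimes\Lambda^n_k\to O_e\otimes\Delta^n$ becomes, after applying $\Map(O_{e'},-)$, a pushout of a trivial cofibration of simplicial sets (here one uses A1 again, together with the fact that $\Map(O_{e'},O_e\otimes K)$ is weakly equivalent to $\Map(O_{e'},O_e)\otimes K$ in a sense compatible with the pushout); then A2 upgrades a transfinite composite of such maps to a filtered homotopy colimit of trivial cofibrations in $\Sset$, which is again a weak equivalence. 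Combined with the lifting property (a retract argument: any trivial cofibration is a retract of a $J$-cell complex), this yields MC4. Quillen's SMO, which says the simplicial action is compatible with the enrichment, then upgrades this to a simplicial model structure, since the pushout-product axiom SM7 for $(I,J)$ reduces via adjunction to the known pushout-product behavior of $(\boundary\Delta^n\hookrightarrow\Delta^n,\Lambda^n_k\hookrightarrow\Delta^n)$ in $\Sset$. The hardest conceptual point throughout is that the orbits $O_e$ need not be cofibrant nor small in the strict categorical sense; Axioms A1--A3 are precisely the substitute that allows the small object machine to run, and care is needed to distinguish the strict commutation of A3 (used to terminate the argument) from the mere homotopical commutation of A1--A2 (used to identify the resulting map as a weak equivalence).
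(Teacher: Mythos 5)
This statement is imported verbatim from Dwyer--Kan \cite{DK83}; the paper supplies no proof of its own, so there is nothing internal to compare against. Your outline is nonetheless a faithful reconstruction of the actual Dwyer--Kan argument: defining the classes via $\Map(O_e,-)$, taking $I=\{O_e\otimes\partial\Delta^n\hookrightarrow O_e\otimes\Delta^n\}$ and $J=\{O_e\otimes\Lambda^n_k\hookrightarrow O_e\otimes\Delta^n\}$ as generators, running the small object argument with A3 supplying termination and A1--A2 supplying the homotopical control, and reducing SM7 to the pushout-product axiom in $\Sset$ by adjunction. The one place your sketch is looser than it needs to be is the claim that $\Map(O_{e'},O_e\otimes K)$ is ``weakly equivalent to $\Map(O_{e'},O_e)\otimes K$ in a sense compatible with the pushout'': the cleaner route is to note that $\Lambda^n_k\hookrightarrow\Delta^n$ is a simplicial deformation retract, hence $O_e\otimes\Lambda^n_k\to O_e\otimes\Delta^n$ is a simplicial homotopy equivalence in $M$ and so becomes a weak equivalence under any $\Map(O_{e'},-)$; A1 then identifies the pushout as a homotopy pushout, and a homotopy pushout of a weak equivalence is a weak equivalence. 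With that adjustment your argument is the standard one.
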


{\begin{exam}\textbf{Simplicial model structure in $\GTop_*$:}
If we consider $\GTop_*$, the set $\{G/G_{\alpha}\}_{\alpha \in A}$ for $G_{\alpha} \subset G$ satisfies the axioms in \ref{or} and becomes the set of orbits in $\GTop_*$. Here, $\GTop_*$ has a closed simplicial model structure with the set of homomorphisms between two objects $X$ and $Y$ defined by $\mbox{GHom}(X, Y)$ [See Notation \eqref{not}]. The model structure is the obvious one, with weak equivalences being fixed-point-wise and fibrations being Serre fibrations in fixed-point spaces. Cofibrations are defined by the maps having the left lifting property with respect to trivial fibrations. Similarly, $\GTop_*$ also has a closed simplicial model structure with $G_{+}$ (the disjoint union of $G$ and an added basepoint) instead of $G$ as the based $G$-space.
\end{exam}}

Now we state three theorems that will essentially give our desired localization functor in $\GTop$. Let us denote the orbit category of $G$ by $\mathcal{O}_G$, and the category of contravariant functors from $\mathcal{O}_G$ to $\Top$ by $\mbox{Fun}(\mathcal{O}^{op}_G, \Top)$. The morphisms in $\mbox{Fun}(\mathcal{O}^{op}_G, \Top)$ are natural transformations between two functors. The model structure in $\mbox{Fun}(\mathcal{O}^{op}_G, \Top)$ is again defined in the obvious way.

\begin{thm}\cite{El83}
There exists a Quillen pair of functors $\Phi$ and $\Psi$ between:
$$\mbox{Fun}(\mathcal{O}^{op}_G,\Top)\stackrel[\Phi]{\Psi}{\leftrightarrows} \GTop.$$
Moreover, the above two categories are Quillen equivalent, which makes their homotopy categories equivalent.
\end{thm}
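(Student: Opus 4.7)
The plan is to exhibit the adjoint pair explicitly and then establish the Quillen equivalence by a cellular induction. First I would set $\Psi:\GTop\to\mbox{Fun}(\mathcal{O}^{op}_G,\Top)$ to be the fixed-point diagram functor, sending a $G$-space $X$ to the presheaf $G/H\mapsto X^H=\mbox{GMap}(G/H,X)$ and a $G$-map to its induced map on fixed points. The left adjoint $\Phi$ is then forced to be the coend
\[
\Phi(F)=\int^{G/H\in\mathcal{O}_G}F(G/H)\times G/H,
\]
equipped with the diagonal $G$-action on each summand $F(G/H)\times G/H$. The adjunction isomorphism is a formal consequence of the coend/end calculus together with the Yoneda identity $\mbox{GMap}(G/H,X)\cong X^H$.

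Next I would verify that $(\Phi,\Psi)$ is a Quillen pair. By definition a weak equivalence (respectively, fibration) in $\GTop$ is a $G$-map inducing weak equivalences (respectively, Serre fibrations) on every fixed-point set $X^H$, while in $\mbox{Fun}(\mathcal{O}^{op}_G,\Top)$ with its projective model structure the same classes are detected objectwise. Hence $\Psi$ preserves and reflects both fibrations and weak equivalences, so $(\Phi,\Psi)$ is automatically a Quillen adjunction.

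The heart of the argument is the Quillen equivalence, which I would reduce to showing that the unit $\eta_F:F\to\Psi\Phi(F)$ is a weak equivalence for every cofibrant $F$. For a representable diagram $F=\mathcal{O}_G(-,G/K)$ the coend collapses to $\Phi(F)\cong G/K$, and then $(G/K)^H\cong\mathcal{O}_G(G/H,G/K)$ recovers $F(G/H)$ on the nose, so $\eta_F$ is an isomorphism. I would then extend this conclusion to all cofibrant $F$ by transfinite induction on a cellular presentation: every cofibrant object is a retract of a cell complex built from generating cofibrations of the form $\mathcal{O}_G(-,G/K)\otimes(S^{n-1}\hookrightarrow D^n)$. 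The functor $\Phi$ preserves the pushouts and transfinite compositions that make up these cellular presentations because it is a left adjoint, while $\Psi$ preserves them up to homotopy thanks to axioms A1--A3 of Definition \ref{or} applied to the orbit set $\{G/H\}$. Combined with the two-out-of-three property this closes the induction.

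The main obstacle will be checking that $\Phi$ sends a cellular attachment in $\mbox{Fun}(\mathcal{O}^{op}_G,\Top)$ to a $G$-cofibration whose $H$-fixed points faithfully record the attachment in $F(G/H)$. This is precisely a compatibility statement between the coend defining $\Phi$ and the fixed-point functors $(-)^H$ on pushouts and filtered colimits, and it is delivered by axioms A1--A3 applied to the orbits $G/H$. Once this step is in hand, the inductive argument preserves the property ``$\eta_F$ is a weak equivalence'', and the standard Quillen-equivalence machinery then upgrades this into an equivalence of homotopy categories.
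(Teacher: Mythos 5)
The paper offers no proof of this statement: it is quoted directly from Elmendorf's paper \cite{El83}, and the text immediately afterwards points out that it is subsumed by the Dwyer--Kan orbit theorem (Theorem \ref{Lo1}), which is the form actually used later. So there is nothing in the paper to match your argument against line by line; what you have written is the standard modern (Piacenza/Dwyer--Kan style) proof, and as an outline it is correct. It is worth noting that your route is genuinely different from Elmendorf's original one: in \cite{El83} the comparison functor from diagrams to $G$-spaces is not the strict left adjoint but a two-sided bar construction $B(-,\mathcal{O}_G,-)$, and the equivalence is established by exhibiting explicit homotopy inverses rather than by verifying the Quillen-equivalence criterion; your approach instead takes the enriched coend $\Phi(F)=\int^{G/H}F(G/H)\times G/H$ (which, by co-Yoneda, is just evaluation at $G/e$ with its residual $G$-action), checks the unit on representables, and propagates by cellular induction. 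The latter is exactly the specialization of Theorem \ref{Lo1} to the orbit set $\{G/H\}$, so your appeal to axioms A1--A3 is the right move and is consistent with how the paper organizes the material. Two small points to tighten: the $G$-action on $F(G/H)\times G/H$ is the product of the trivial action on $F(G/H)$ with the translation action on $G/H$, not a ``diagonal'' action in any meaningful sense; and the real content buried in your induction is the point-set verification that $(-)^H=\mbox{GMap}(G/H,-)$ commutes with pushouts along the closed $G$-cofibrations $G/K\times S^{n-1}\hookrightarrow G/K\times D^n$ and with the relevant transfinite compositions --- this is precisely the assertion that the orbits $G/H$ satisfy A1--A3, which the paper states without proof in its Example, so you should either prove it or cite it explicitly rather than fold it silently into the induction step.
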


From a slightly different perspective, this can be generalized as the following theorem. Let us denote the category of simplicial sets by $\Sset$.

\begin{thm}\cite{DK83}\label{Lo1}
Let $\sM$ be a simplicial category satisfying axioms MO and SMO of Quillen \cite{Qu67}. Let $\{O_e\}_{e \in E}$ be a set of orbits for $M$, and let $O \subset \sM$ be the full simplicial subcategory. Then the ``singular functor":
\[
\Map(O, *): \mathsf{M} \to \Sset^{O^{op}}
\]
has a left adjoint, the realization functor:
\[
O \otimes *: \Sset^{O^{op}} \to \sM.
\]
Moreover, in the model category structure, the two functors form a Quillen pair, and the two categories become Quillen equivalent.
\end{thm}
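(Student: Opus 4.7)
The plan is to construct the adjoint pair explicitly via the enriched coend/end formalism, verify the adjunction by a formal calculation, deduce the Quillen pair property directly from the defining property of the model structure on $\sM$ given in the previous theorem, and then establish the Quillen equivalence by a cell-induction argument that uses axioms A1--A3 of Definition \ref{or} in the order they are stated.

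First I would write down the two functors concretely. The singular functor sends $X \in \sM$ to the simplicial presheaf $O_e \mapsto \Map(O_e, X)$, with functoriality in $O$ supplied by the enriched composition available because $\sM$ is simplicial (axiom SMO). For a presheaf $F \in \Sset^{O^{\op}}$ the realization is the coend
\[
O \otimes F \;=\; \int^{O_e \in O} O_e \otimes F(O_e),
\]
which exists because $\sM$ is cocomplete (axioms MO and A0). The adjunction
\[
\Hom_{\sM}(O \otimes F,\, X) \;\cong\; \Hom_{\Sset^{O^{\op}}}(F,\, \Map(O, X))
\]
is then a direct consequence of the simplicial tensor/cotensor adjunction $\Map(O_e \otimes K, X) \cong \Map(K, \Map(O_e, X))$ combined with the universal property of the coend and the enriched Yoneda lemma; I would simply unwind the definitions and check naturality in both variables.

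Second I would verify the Quillen pair property. By the Dwyer--Kan theorem quoted just above, weak equivalences and fibrations in $\sM$ are precisely the maps detected objectwise by each functor $\Map(O_e, -)$, while the projective model structure on $\Sset^{O^{\op}}$ has objectwise weak equivalences and fibrations by definition. Hence $\Map(O, -)$ tautologically carries (trivial) fibrations to (trivial) fibrations, which is the standard criterion for a right Quillen functor.

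Third, and this is where the real work lives, I would show the Quillen equivalence by proving the derived counit $O \otimes F^{\mathrm{cof}} \to X$, with $F = \Map(O, X)$ for $X$ fibrant, is a weak equivalence; the corresponding statement for the derived unit on cofibrant presheaves then follows by a $2$-out-of-$3$ argument after applying $\Map(O, -)$. On a representable cell $F = O_{e'} \otimes \Delta^n$ the counit reduces, via the enriched Yoneda lemma, to an isomorphism upon applying any $\Map(O_e, -)$, so the claim holds at the bottom of the cellular filtration. Climbing the filtration, axiom A1 ensures that $\Map(O_e, -)$ commutes \emph{up to homotopy} with the pushouts of the form $O_{e'} \otimes L \cup_{O_{e'} \otimes K} X_a \to X_{a+1}$ that appear at each successor stage; axiom A2 extends this to the transfinite composites assembling these pushouts; and axiom A3 supplies \emph{strict} commutation at the chosen limit ordinal $\lambda$, which is precisely what is needed to convert homotopy-commutation into an honest weak equivalence in the colimit. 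A transfinite induction along the cellular filtration of a cofibrant $F$ then yields $\Map(O_e, O \otimes F) \simeq F(O_e)$ for every $e$, which is the counit statement. The principal obstacle is orchestrating this induction: one must choose a cofibrant replacement of $\Map(O, X)$ whose cellular filtration matches the precise pushout shape of A1, bound its length by the ordinal $\lambda$ of A3, and track the homotopy-colimit comparisons compatibly with the simplicial enrichment at every stage. These are technical but well-codified moves in the Dwyer--Kan setup, and once in place the Quillen equivalence follows at once.
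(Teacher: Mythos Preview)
The paper does not supply its own proof of this theorem: it is quoted verbatim from \cite{DK83} and invoked as a black box, so there is no in-paper argument to compare against. Your sketch is a faithful reconstruction of the original Dwyer--Kan proof---the coend realization, the tautological right-Quillen check, and the cell-by-cell induction using axioms A1--A3 are exactly the ingredients in \cite{DK83}---so nothing is missing and nothing is genuinely different. One minor remark: the passage from the derived counit to the derived unit does not need a separate $2$-out-of-$3$ argument, since by construction the right adjoint $\Map(O,-)$ \emph{creates} weak equivalences in $\sM$, and for a Quillen adjunction with that property the derived counit condition alone already forces Quillen equivalence.
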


Now we return to the category of $D$-shaped diagrams and recall the definition of local objects and the existence of a coaugmented localization functor in $S^D$.

\begin{defn}\cite{Ch05}
Let $f: \underline{A} \to \underline{B}$ be a map between cofibrant objects. A diagram $\underline{X}$ is called $f$-local if $\underline{X}$ is fibrant and the induced map:
\[
\Hom(\underline{B}, \underline{X}) \simeq \Hom(\underline{A}, \underline{X})
\]
is a weak equivalence in simplicial homotopy.
\end{defn}

\begin{thm}\cite{Ch05}\label{lo2}
With $f$ as above, in the category $S^D$, there exists a coaugmented localization functor $\underline{L}_f$ that is universal in the sense that any map $\underline{X} \to \underline{Y}$ to some $f$-local diagram $\underline{Y}$ admits a unique factorization upto simplicial homotopy:
\[
\underline{X} \to \underline{L}_f \underline{X} \to \underline{Y}.
\]
\end{thm}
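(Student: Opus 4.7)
The plan is to construct $\underline{L}_f$ by a transfinite small object argument in $S^D$, adapted to the setting where $S^D$ is not assumed cofibrantly generated in the classical sense. First, I would recast $f$-locality as an injectivity/lifting property. Using the pushout-product of $f : \underline{A} \to \underline{B}$ with the standard generating cofibrations $\partial\Delta^n \hookrightarrow \Delta^n$ of $\Sset$, form the class
\[
J_f = \bigl\{\, (\underline{A}\otimes\Delta^n)\sqcup_{\underline{A}\otimes\partial\Delta^n}(\underline{B}\otimes\partial\Delta^n)\ \longrightarrow\ \underline{B}\otimes\Delta^n \,\bigr\}_{n\ge 0},
\]
together with the horn-type maps coming from $\Lambda^n_k \hookrightarrow \Delta^n$ for ordinary fibrancy. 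A fibrant diagram $\underline{X}$ is $f$-local precisely when $\underline{X} \to *$ has the right lifting property with respect to every map in $J_f$.

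Next I would build the coaugmentation $\eta_{\underline{X}} : \underline{X} \to \underline{L}_f\underline{X}$ by iteratively attaching cells to solve all lifting problems against $J_f$. Set $\underline{X}_0 = \underline{X}$, and for a successor ordinal $\alpha+1$ define $\underline{X}_{\alpha+1}$ as the pushout
\[
\xymatrix{
\coprod_{\sigma}\mathrm{dom}(j_\sigma) \ar[r] \ar[d] & \underline{X}_\alpha \ar[d] \\
\coprod_{\sigma}\mathrm{cod}(j_\sigma) \ar[r] & \underline{X}_{\alpha+1}
}
\]
indexed over all commutative squares $\sigma$ from a map $j_\sigma \in J_f$ into $\underline{X}_\alpha$. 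At limit ordinals, pass to the transfinite colimit. Choose a limit ordinal $\lambda$ (provided by Chorny's analogue of axioms A2--A3) for which the relevant mapping spaces commute with $\lambda$-sequential colimits, and set $\underline{L}_f\underline{X} := \underline{X}_\lambda$. Functoriality is automatic from the naturality of the construction, and the coaugmentation is simply the canonical map into the colimit.

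To verify that $\underline{L}_f\underline{X}$ is $f$-local, I would argue that any lifting problem against a map in $J_f$ factors through some stage $\underline{X}_\alpha$ (this is exactly the smallness/compactness input), and is therefore already filled in at stage $\alpha+1$. For the universal property, suppose $g : \underline{X} \to \underline{Y}$ with $\underline{Y}$ $f$-local. Inductively extend $g$ over each stage of the cellular tower: at each pushout step one must solve lifting problems against maps in $J_f$, which is possible (and unique up to simplicial homotopy) precisely because $\underline{Y}$ is $f$-local. The resulting map $\underline{L}_f\underline{X} \to \underline{Y}$ is the desired factorization, and two such factorizations are connected by a simplicial homotopy built cell-by-cell.

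The main obstacle is the smallness step: in an arbitrary diagram category $S^D$ the usual Quillen small object argument is unavailable, since $S^D$ need not be cofibrantly generated. This is resolved by working with a class-cofibrantly-generated structure, exploiting the orbit-type compactness axioms A0--A3 of Definition \ref{or} to guarantee that the $\lambda$-transfinite composition used above actually terminates and that $\mathrm{Hom}(\underline{A},-)$ and $\mathrm{Hom}(\underline{B},-)$ commute with it up to homotopy. Once this termination is in hand, coaugmentation, functoriality, and the universal factorization property follow from the standard formal consequences of the cellular construction.
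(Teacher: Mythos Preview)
The paper does not supply its own proof of this theorem: it is quoted as \cite{Ch05} and used as a black box, so there is no in-paper argument to compare against. Your sketch is a reasonable outline of Chorny's actual method---reformulate $f$-locality as a right lifting property against the horns on $f$ together with the maps needed for fibrancy, then run a transfinite small object argument---and you correctly flag that the delicate point is smallness when $S^D$ is not cofibrantly generated.

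One clarification is worth making. Your set $J_f$ as written is genuinely a \emph{set}, so the ordinary small object argument would already apply to it; the place where a proper class enters is in the ``horn-type maps \dots\ for ordinary fibrancy'' that you mention only in passing. In the projective model structure on $S^D$ these are obtained by tensoring $\Lambda^n_k\hookrightarrow\Delta^n$ with the orbits (or representables), and it is exactly when the collection of orbits fails to be a set that cofibrant generation breaks down. Chorny's contribution is a generalized small object argument for such class-indexed families, not an application of the Dwyer--Kan axioms A0--A3 of Definition~\ref{or} per se; those axioms govern the orbit model structure itself rather than the termination of the localization tower. So your invocation of A0--A3 for the smallness step is slightly misdirected, though the overall shape of the argument is right.
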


{We will now give the definition of $f$-local objects in $\GTop$ with respect to a $G$-map $f: A \to B \in \GTop$}. We will consider only non-based spaces here to avoid complexity. The theory is exactly analogous for based $G$-spaces.
{\begin{defn} (a) Let $f: A \to B$ is a morphism between cofibrant objects in $\GTop$.
A space $X$ in $\GTop$ is $f$-local if $X$ is fibrant and the map induces weak equivalences:\\ 
\[
\Map(B, X) \simeq \Map(A, X).
\]
This in turn gives $\Map_G(f \times G/H, X)$ is a weak equivalences in $\Top$ for each closed subgroups $H$ of $G.$
\end{defn}
\begin{prop}
Let $f$ be above in $\GTop$.
A space $X$ in $\GTop$ is said to be $f$-local iff $X$ is fibrant and the map induces weak equivalences simplicially\\ 
\[
\Hom(B, X) \simeq \Hom(A, X).
\]
\end{prop}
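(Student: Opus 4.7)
The plan is to identify the simplicial mapping complex $\Hom(B,X)$ with the singular simplicial set $\operatorname{Sing}\Map(B,X)$ of the topological $G$-mapping space, and then derive the equivalence from the Quillen equivalence between $\Top$ and $\Sset$ applied fixed-point-wise. The fibrancy requirement on $X$ is identical in the definition and in the proposed reformulation, so the only content is the equivalence of the two mapping-space conditions.

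First, I would apply the exponential law. Since $G$ acts trivially on $|\Delta|^n$, there are natural bijections
\[
\Hom(B,X)_n=\Map(B\times|\Delta|^n,X)\;\cong\;\Map(|\Delta|^n,\Map(B,X))=\operatorname{Sing}\Map(B,X)_n,
\]
compatible with the face/degeneracy operators and with the conjugation $G$-action on $\Map(B,X)$. For any closed subgroup $H\le G$, an $H$-fixed simplex corresponds to a simplex landing in the $H$-fixed subspace, so passing to $H$-fixed points gives
\[
\Hom(B,X)^H\;\cong\;\operatorname{Sing}\bigl(\Map(B,X)^H\bigr)=\operatorname{Sing}\Map_H(B,X),
\]
and similarly for $A$.

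Next I would invoke the standard fact—immediate from the $(|\cdot|,\operatorname{Sing})$ Quillen equivalence—that a continuous map of topological spaces is a weak homotopy equivalence if and only if the induced map on singular simplicial sets is a weak equivalence of simplicial sets. Combined with the unpacking (already recorded in the preceding definition) that the weak equivalence $\Map(B,X)\simeq\Map(A,X)$ in $\GTop$ amounts to weak equivalences $\Map_H(B,X)\simeq\Map_H(A,X)$ in $\Top$ for every closed $H\le G$, this yields the chain: the map is a weak equivalence in $\GTop$ iff each $\Map_H(B,X)\to\Map_H(A,X)$ is a weak homotopy equivalence of spaces, iff each $\operatorname{Sing}\Map_H(B,X)\to\operatorname{Sing}\Map_H(A,X)$ is a simplicial weak equivalence, iff each $\Hom(B,X)^H\to\Hom(A,X)^H$ is a simplicial weak equivalence, which is precisely the assertion that $\Hom(B,X)\to\Hom(A,X)$ is a weak equivalence simplicially in the model structure on simplicial $G$-sets.

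The only genuine bookkeeping is verifying that $\operatorname{Sing}$ commutes with $H$-fixed points, which is immediate because $|\Delta|^n$ has trivial $G$-action, so an $H$-fixed $n$-simplex is exactly a map landing in the $H$-fixed subspace. There is no real obstacle beyond this identification; the remaining content is a formal consequence of the Quillen equivalence between $\Top$ and $\Sset$, applied separately on each fixed-point space.
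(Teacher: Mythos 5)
Your proposal is correct and follows essentially the same route as the paper: identify $\Hom(B,X)_n\cong\operatorname{Sing}\Map(B,X)_n$ via the exponential law and then transfer weak equivalences across the $(|\cdot|,\operatorname{Sing})$ Quillen equivalence. You are in fact slightly more careful than the paper, which suppresses the fixed-point bookkeeping entirely, whereas you explicitly check that $\operatorname{Sing}$ commutes with $H$-fixed points so the equivalence can be tested on each $\Hom(B,X)^H$.
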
}
\begin{proof}
Note that the adjunction gives us the isomorphism of simplicial set \[\Hom(B,X)_n\cong \mbox{Sing}\Map (B,X)_n.\] The Quillen equivalence of the adjoint pair, the geometrical realization functor and the Singular functor between model categories $|-|:\Sset\leftrightarrows \Top: \mbox{Sing}$, implies the counit map $|\mbox{Sing}(\Map(B,X))|\to \Map(B,X)$ is weak equivalence. So we have the following diagram of weak equivalence,
\begin{align*}
\Map(B,X)\simeq |\mbox{Sing}(\Map(B,X))| \simeq  |\Hom(B,X)|
\end{align*} which completes the proof.
\end{proof}
\begin{thm}\label{Glo}
In the category $\GTop$, consider a cofibration $f$ between cofibrant objects $A \to B$. There exists a coaugmented localization functor $L^G_f: \GTop \to \GTop$ that is universal in the sense that any map between $G$-spaces $X \to Y$ to some $f$-local $G$-space $Y$ admits a unique factorization upto $G$-homotopy:
\[
X \xrightarrow{j_X} L^G_f X \to Y.
\]
\end{thm}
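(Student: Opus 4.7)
The plan is to apply Chorny's existence theorem (Theorem~\ref{lo2}) in the diagram category $\mbox{Fun}(\OG^{op},\Top)$ and then transport the resulting localization back to $\GTop$ along the Quillen equivalence of Theorem~\ref{Lo1}. Throughout, let $\Phi\dashv\Psi$ denote the Elmendorf adjunction, where the singular functor $\Psi$ sends a $G$-space $X$ to the fixed-point diagram $G/H\mapsto X^H \cong \Map(G/H,X)$.

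First I would form $\underline{f}:=\Psi(f)\colon\underline{A}\to\underline{B}$ in $\mbox{Fun}(\OG^{op},\Top)$. Since $\Phi\dashv\Psi$ is a Quillen equivalence and $A,B$ are cofibrant in $\GTop$, after cofibrant replacement if necessary we may assume $\underline{f}$ is a cofibration between cofibrant diagrams. Theorem~\ref{lo2} then produces a coaugmented localization functor $\underline{L}_{\underline{f}}$ on $\mbox{Fun}(\OG^{op},\Top)$ together with its universal factorization property up to simplicial homotopy. Define
$$
L^G_f(X) := \Phi\bigl(\underline{L}_{\underline{f}}(\Psi(X))\bigr),
$$
and let $j_X\colon X\to L^G_f(X)$ arise from the coaugmentation $\Psi(X)\to\underline{L}_{\underline{f}}\Psi(X)$ by applying $\Phi$ and composing with the unit $X\to\Phi\Psi(X)$, which is a $G$-weak equivalence on CW-objects.

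To verify universality, suppose $X\to Y$ with $Y$ an $f$-local $G$-space. The crucial step is to show that $\Psi(Y)$ is $\underline{f}$-local in $\mbox{Fun}(\OG^{op},\Top)$. This follows from the enriched adjunction together with the proposition preceding this theorem: the simplicial mapping complexes satisfy $\mbox{Hom}(\underline{B},\Psi(Y))\simeq\mbox{GHom}(B,Y)$ (and similarly for $\underline{A}$), so $f$-locality of $Y$ forces the comparison map $\mbox{Hom}(\underline{B},\Psi(Y))\to\mbox{Hom}(\underline{A},\Psi(Y))$ to be a weak equivalence. Theorem~\ref{lo2} then furnishes a factorization $\Psi(X)\to\underline{L}_{\underline{f}}\Psi(X)\to\Psi(Y)$, unique up to simplicial homotopy, and applying $\Phi$, together with the counit $G$-equivalence $\Phi\Psi(Y)\simeq_G Y$ for fibrant $Y$, produces the desired factorization $X\xrightarrow{j_X} L^G_f X\to Y$ in $\GTop$, unique up to $G$-homotopy.

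The principal obstacle is the precise matching of $f$-locality in $\GTop$ with $\underline{f}$-locality in the diagram category. One must verify that the simplicial mapping spaces transfer faithfully through $\Phi\dashv\Psi$ on cofibrant-fibrant pairs, i.e.\ that $\mbox{GHom}(C,Y)\simeq\mbox{Hom}(\Psi(C),\Psi(Y))$ for cofibrant $C\in\GTop$ and fibrant $Y$, and that cofibrant replacement in $\mbox{Fun}(\OG^{op},\Top)$ is compatible with $\Psi$ applied to a cofibration in $\GTop$. Once this bridge is in place, the coaugmentation, homotopical idempotence, and uniqueness of the factorization for $L^G_f$ follow formally from the corresponding properties of $\underline{L}_{\underline{f}}$.
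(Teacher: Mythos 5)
Your proposal follows essentially the same route as the paper: the paper's own proof simply invokes Theorems \ref{Lo1} and \ref{lo2}, i.e., it transfers Chorny's localization functor on the diagram category across the Dwyer--Kan/Elmendorf Quillen equivalence, which is exactly what you spell out. Your version is more explicit about defining $L^G_f$ as a composite and about matching $f$-locality with $\underline{f}$-locality, but the underlying argument is the same.
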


\begin{proof}
Replace the category $M$ by $\GTop$ in Theorem \ref{Lo1}. Then the proof is a direct consequence of Theorems \ref{Lo1} and \ref{lo2}.
\end{proof}

We list some properties of the functor $L_f^G$, which are easy consequences of the universal property.

\begin{prop}
Let $f: A \to B$ be a $G$-map between cofibrant objects in $\GTop$. Then the following holds:
\begin{enumerate}
    \item For $G$-spaces $X$ and $Y$, $L_f^G(X \times Y) \simeq_G L_f^G X \times L_f^G Y$.
    \item A $G$-connected space $X$ is local with respect to $\Sigma f: \Sigma A \to \Sigma B$ if and only if $\Omega X$ is $f$-local. Here, the suspension coordinate has trivial $G$-action.
    \item If $X$ is $f$-local, then it is also $\Sigma^n f$-local for all $n \geq 0$.
\end{enumerate}
\end{prop}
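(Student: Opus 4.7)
The plan is to handle each of the three claims in turn.

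For (1), I would first show that a product of $f$-local $G$-spaces is again $f$-local. This is a direct consequence of the natural isomorphism $\Hom(C, X \times Y) \cong \Hom(C, X) \times \Hom(C, Y)$ together with the fact that a finite product of weak equivalences of simplicial sets is a weak equivalence, and a finite product of fibrant objects is fibrant. Given this, the map $X \times Y \to L_f^G X \times L_f^G Y$ lands in an $f$-local target, so Theorem~\ref{Glo} supplies a factorization $\phi : L_f^G(X \times Y) \to L_f^G X \times L_f^G Y$ that is unique up to $G$-homotopy. To see $\phi$ is a $G$-weak equivalence I would verify that the right-hand side enjoys the same universal property as the left, via the cartesian closedness
$$\Hom(X \times Y, W) \cong \Hom(X, \Hom(Y, W))$$
combined with the standard closure of $f$-local objects under the functor $\Hom(Y, -)$.

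For (2), the essential input is the $G$-equivariant suspension-loop adjunction
$$\Hom(\Sigma A, X) \cong \Hom(A, \Omega X),$$
which is $G$-equivariant precisely because the suspension coordinate $S^1$ carries the trivial $G$-action. The same holds with $B$ replacing $A$, and the map induced by $\Sigma f$ on the left corresponds to the map induced by $f$ on the right. Hence the $\Sigma f$-locality of $X$ is equivalent to the $f$-locality of $\Omega X$. The hypothesis that $X$ is $G$-connected is used to ensure that $\Omega X$ has the expected $G$-homotopy type independent of basepoint choice and that the relevant fibrancy conditions in the two definitions of local object match.

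For (3), I would induct on $n$, the case $n = 0$ being immediate. The step uses the companion form of the exponential law,
$$\Hom(\Sigma Z, X) \cong \Omega \Hom(Z, X),$$
which is again $G$-equivariant for trivial reasons. If $X$ is $\Sigma^n f$-local, then the map $\Hom(\Sigma^n B, X) \to \Hom(\Sigma^n A, X)$ is a weak equivalence of pointed simplicial sets; since $\Omega$ preserves such equivalences, applying it shows that $\Hom(\Sigma^{n+1} B, X) \to \Hom(\Sigma^{n+1} A, X)$ is also a weak equivalence, so $X$ is $\Sigma^{n+1} f$-local. The hardest step I expect is the verification in (1) that $\phi$ is a $G$-equivalence: this rests on $f$-locality being preserved by the internal mapping functor $\Hom(Y, -)$, which is automatic in a well-behaved simplicial model category but merits explicit justification in the equivariant setting handled by Chorny's framework.
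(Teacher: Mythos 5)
Your proposal is correct, and for part (1) --- the only part the paper actually proves --- it follows essentially the same route: both arguments rest on the key lemma that $\Map(Y,T)$ is $f$-local whenever $T$ is, and both obtain the comparison map $l\colon L_f^G(X\times Y)\to L_f^GX\times L_f^GY$ from the universal property of Theorem~\ref{Glo}. The only divergence is in the final step: the paper applies the exponential adjunction twice to construct an explicit map $r\colon L_f^GX\times L_f^GY\to L_f^G(X\times Y)$ in the other direction (adjoint of $j_{X\times Y}$ gives $X\to\Map(Y,L_f^G(X\times Y))$, which factors through $L_f^GX$, and adjointing back gives $r$), and then invokes universality and divisibility to conclude $l$ and $r$ are $G$-homotopy inverse; you instead show that $X\times Y\to L_f^GX\times L_f^GY$ is an $f$-local equivalence onto an $f$-local target, so that the right-hand side is itself a model for the localization. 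These are interchangeable packagings of the same idea, both hinging on $\Map(Y,-)$ preserving locality. Parts (2) and (3) are stated in the paper without proof (the authors explicitly prove only part (1)), and your suspension--loop adjunction arguments, including the induction for (3), are the standard ones that fill that omission correctly.
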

\begin{proof}
We will only prove the part (1) which is exactly analogous to the non-equivariant case. First note that the product of two $f$-local spaces is $f$-local. This follows from the fact that if $T$ is $f$-local, $\mbox{Map}(A,T)$ is $f$-local for any $A$ in $\GTop$. The map $j_X\times j_Y:~X\times Y\to L_f^GX\times L_f^GY$ will factor through $L^G_f(X\times Y)$ giving us a desired map $l:~L_f^G(X \times Y) \to L_f^G X \times L_f^G Y$. To get the opposite direction map note that the adjoint of $j_{X\times Y}$ will give a map $X\to \mbox{Map}(Y, L_f^G(X\times Y))$ which will further factor through $L_f^GX$ giving us a map $L_f^GX\to \mbox{Map}(Y, L_f^G(X\times Y))$. The adjoint of this map will give us the desired map $r:~L_f^GX\times L_f^GY\to L_f^G(X\times Y)$. These two maps $l, r$ will fit together in the following $G$-homotopic commutative diagram:
\[
\xymatrix@C=3cm{
X\times Y\ar[d]^{j_{X\times Y}}\ar@/_3.0pc/[dd]_{j_X\times j_Y}\ar[rd]^{j_{X\times Y}}\\
L_f^G(X\times Y)\ar[r]^-{r\circ l}\ar[d]^{l} & L_f^G(X\times Y)\\
L_f^GX\times L_f^GY \ar[ur]^{r}.
}
\]
By universality and divisibility \cite{Ch05}, $l$ and $r$ are $G$-homotopic inverse to each other. 
\end{proof}

\subsection{Equivariant $\Gamma$-Spaces}
We recall some basic facts about equivariant $\Gamma_G$-spaces \cite{Sh89, Sa11}. Let $\Gamma_G$ denote the full subcategory of $C\Top^G_*$ of finite $G$-sets whose underlying sets are denoted by $\textbf{n} := \{0, 1, \dots, n\}$, based at $0$. The $\Gamma_G$-spaces are defined to be the $G$-equivariant functors from $\Gamma_G \to \GTop_*$. It is to be noted that $\Gamma_e$ is essentially Segal's $\Gamma$-space with $\Gamma^{\operatorname{op}}$.

\begin{defn}\label{GamGs}
A $\Gamma_G$-space $\mbox{A}$ is called \emph{special} if:
\begin{enumerate}
    \item $\mbox{A}(\textbf{0})$ is $G$-contractible.
    \item The adjoint map $P_\textbf{n}: \mbox{A}(\textbf{n}) \to \Map_*(\textbf{n}, \mbox{A}(\textbf{1}))$ of the based $G$-map $\textbf{n} \wedge \mbox{A}(\textbf{n}) \to \mbox{A}(\textbf{1})$ induced by the map $p_j$ is a $G$-homotopy equivalence. Here, $p_j(\textbf{n} - \{j\}) = 0$ and $p_j(j) = 1$.
\end{enumerate}
\end{defn}

Given a $\Gamma_G$-space $\mbox{A}$, we can form a new $G$-space defined by $\mbox{S}_G \mbox{A}_V$:
\[
\mbox{S}_G \mbox{A}_V = \mbox{B}(S^V, \Gamma_G, \mbox{A}) / \mbox{B}(\infty, \Gamma_G, \mbox{A}),
\]
where $\mbox{B}(M, \Gamma_G, \mbox{A})$ is the classifying space of the usual two-sided Bar construction $\mbox{B}_*(-, \Gamma_G, \mbox{A})$, and $S^V$ is the contravariant $G$-functor taking $S \mapsto \Map_*(S, S^V)$. It is to be noted that the $G$-action on $S_G \mbox{A}_V$ is induced from the $G$-simplicial structure of the Bar construction. The association $\mbox{A}(-) \mapsto \mbox{B}(-, \Gamma_G, \mbox{A})$ is functorial on the category of $\Gamma_G$-spaces.

Shimakawa proved the following theorem, which is essentially the equivariant version of Segal's work in \cite{Se72}, stating that every special $\Gamma$-space gives rise to an $\Omega$-spectrum.

\begin{thm}\cite{Sh89}\label{eqsem}
Every special $\Gamma_G$-space gives rise to an almost $\Omega$-$G$-spectrum graded over finite-dimensional $G$-representations $\mbox{V}$, given by $\mbox{S}_G \mbox{A}_V$. The structure map:
\[
\epsilon_{V,W}: S_G \mbox{A}_W \to \Omega^V S_G \mbox{A}_{V \oplus W}
\]
is a $G$-homotopy equivalence whenever $W^G \neq \emptyset$. In particular, $\epsilon: S_G \mbox{A}_0 \to \Omega S_G \mbox{A}_{\R}$ is a $G$-homotopy equivalence if and only if $\mbox{A}(\textbf{1})^H$ is grouplike for every subgroup $H \subset G$.
\end{thm}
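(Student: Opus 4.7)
The plan is to follow Shimakawa's equivariant adaptation of the Segal machine: verify that the structure map $\epsilon_{V,W}$ is a $G$-homotopy equivalence by checking it on $H$-fixed points for every closed subgroup $H \subset G$, and reduce each such check to the classical (non-equivariant) Segal theorem \cite{Se72} applied to a suitable restriction of $A$.

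First I would analyze how the two-sided bar construction $B(S^V, \Gamma_G, A)$ interacts with the $H$-fixed-point functor. Since we work in compactly generated spaces, $(-)^H$ commutes with geometric realization of simplicial $G$-spaces, so it suffices to identify $H$-fixed points simplicially, degree by degree. The $H$-fixed locus of a coend of the form $S^V \otimes_{\Gamma_G} (-)$ decomposes according to the $H$-orbit types of the indexing finite $G$-sets; after this bookkeeping one identifies $(S_G A_V)^H$ with a Segal-type delooping built from the restricted diagram $A|_{\Gamma_H}$ together with the non-equivariant sphere $S^{V^H}$ and suitable factors contributed by free $H$-orbits in $V$.

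Second, I would verify that if $A$ is special then the restricted datum inherits the Segal condition on fixed points: the $G$-homotopy equivalence $P_\mathbf{n}: A(\mathbf{n}) \to \Map_*(\mathbf{n}, A(\mathbf{1}))$ passes to an ordinary homotopy equivalence $A(\mathbf{n})^H \to (A(\mathbf{1})^H)^n$ for every $H$, so the restricted $\Gamma_H$-space is special in the classical sense. Applying Segal's theorem non-equivariantly at each $H$-fixed point yields the desired $\Omega$-spectrum structure, provided the indexing representation contains a trivial summand at level $H$. The hypothesis $W^G \neq \emptyset$ guarantees $W^H \neq \emptyset$ for every closed $H \subset G$, which is precisely what is needed for $\epsilon_{V,W}^H$ to be a genuine weak equivalence rather than merely a group completion.

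Finally, for the borderline case $V = \R$, $W = 0$, the fixed set $W^H$ is empty, so the classical machine only gives a group completion on each $H$-fixed point. This group completion is a weak equivalence if and only if the abelian monoid $\pi_0 A(\mathbf{1})^H$ is already a group, i.e., $A(\mathbf{1})^H$ is grouplike; running over all closed $H \subset G$ yields the stated biconditional. The main obstacle I anticipate is the first step: pinning down how $(-)^H$ interacts with the coend defining $B(-, \Gamma_G, A)$ and extracting the precise identification with a classical bar construction indexed by $\Gamma_H$. This requires care because $\Gamma_G$ mixes the finite-set structure with the $G$-action, so one must track the decomposition of $H$-fixed $G$-maps in each simplicial degree. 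Once this technical unpacking is carried out, the remainder of the argument is a formal application of the non-equivariant Segal machinery together with the Whitehead-type principle that a $G$-map of $G$-CW homotopy type is a $G$-homotopy equivalence as soon as it is one on all $H$-fixed points.
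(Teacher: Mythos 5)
The paper does not prove this statement at all: it is quoted from Shimakawa \cite{Sh89} and used as a black box, so there is no internal proof to compare yours against. Judged on its own terms, your outline follows the expected strategy (reduce to $H$-fixed points for all closed $H \subset G$, then invoke the non-equivariant Segal machine and the equivariant Whitehead theorem), and your treatment of the grouplike biconditional via group completion is the right idea.

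There is, however, a genuine gap at your second step, and it sits exactly at the point that makes the equivariant theorem nontrivial. You claim that specialness passes to an ordinary homotopy equivalence $A(\mathbf{n})^H \to (A(\mathbf{1})^H)^n$ for every $H$. This holds only when $\mathbf{n}$ carries the trivial $H$-action. For a general object $\mathbf{n}$ of $\Gamma_G$ (a finite $G$-set with possibly nontrivial action), taking $H$-fixed points of the $G$-homotopy equivalence $P_{\mathbf{n}}$ gives $A(\mathbf{n})^H \simeq \Map_*(\mathbf{n}, A(\mathbf{1}))^H \cong \prod_{Hx} A(\mathbf{1})^{H_x}$, a product over the $H$-orbits of $\mathbf{n}\setminus\{0\}$ of fixed points for the various stabilizers $H_x$, not a power of $A(\mathbf{1})^H$. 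So the $H$-fixed data is not a classical special $\Gamma$-space in the naive sense you describe; it is a system interlocking the fixed points for all subgroups of $H$ at once. That interlocking is precisely what lets $S_G A$ be delooped by representation spheres $S^V$ with nontrivial action on $V$ (the non-free $H$-cells of $S^V$ contribute factors governed by different stabilizers); the levelwise reduction to the classical machine on each $A(\mathbf{1})^H$, as you set it up, would only produce deloopings by trivial representations, i.e., a naive $\Omega$-$G$-spectrum rather than the representation-graded one claimed. Your first paragraph gestures at the correct orbit decomposition, but your second paragraph then discards it, and the argument as written does not close this gap. (A minor further point: for a representation $W$ the fixed set $W^H$ always contains the origin, so the hypothesis should be read as $W^G \neq 0$, i.e., $W$ contains a trivial summand; you inherit this slip from the statement itself.)
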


\subsection{Equivariant Cohomology}
Let us begin by defining $G$-CW complexes in the category $\GTop$. A $G$-CW complex is formed by attaching cells of the type $G/H \times \DD^n$ in increasing dimensions. Note that a $G$-action on a disjoint union of points is classified up to $G$-equivalence by the decomposition into orbits, and therefore, the $0$-cells are a disjoint union of copies of $G/H$ for conjugacy classes of subgroups $H$. Equivariant $n$-cells are the $G$-spaces $G/H \times D^n$ (with trivial action on $D^n$), where $H \subseteq G$ runs over conjugacy classes of subgroups. The boundary of such a cell is $G/H \times S^{n-1}$. One defines:

\begin{defn}
A relative $G$-CW complex is a pair of $G$-spaces $(X, A)$ together with a filtration $\{X^{(n)}\}$ of $X$ such that:
\begin{itemize}
    \item[(a)] $X^{(0)} = A \bigcup \amalg_{\alpha} G/H_{\alpha}$.
    \item[(b)] $X^{(n+1)}$ is obtained from $X^{(n)}$ as a pushout:
    \[
    \xymatrix{
    \amalg_{j \in J} G/H_j \times S^n \ar[d] \ar[r] & X^{(n)} \ar[d] \\
    \amalg_{j \in J} G/H_j \times D^{n+1} \ar[r] & X^{(n+1)}
    }.
    \]
    \item[(c)] $X = \cup_{n \in \mathbb{Z}} X^{(n)}$ has the colimit topology.
\end{itemize}
\end{defn}

A coefficient system $\underline{M}$ is a contravariant functor from $\mathcal{O}_G$ to the category of Abelian groups. Given a contravariant functor from $\mathcal{O}_G$ to spaces, taking chains yields such a coefficient system of chain complexes. A particular example is the singular chain complex functor $\underline{C}_*(X)$, given by $G/H \mapsto C_*(X^H)$ for any $G$-space $X$.

\begin{defn}
Let $\underline{M}$ be a coefficient system. Define:
\[
C^*_G(X; \underline{M}) := \Hom_{\mathcal{O}_G}(\underline{C}_*(X), \underline{M})
\]
as the singular cochain complex with coefficients in $\underline{M}$. The cohomology of this complex is denoted by $H^*_G(X; \underline{M})$ and is called the Bredon cohomology of $X$ with coefficients in $\underline{M}$.
\end{defn}

\begin{rmk}
In the rest of the section, by Mackey functor, we will mean the underlying coefficient system.
\end{rmk}

\section{Good Objects}\label{GO}
We have introduced the notion of good objects in Introduction \ref{GO}. As our first example, we show that $G$-loop spaces are, in fact, good objects in $\GTop$.

\begin{prop}\label{CAL}
Let $X$ be a $G$-connected space. Then the based loop space $\Omega X$ has the associated $G$-action. The coaugmentation map $\Omega X \to L_f^G \Omega X$ is $G$-homotopic to a $G$-loop map, meaning we can find $G$-spaces $A, B$ such that the following diagram commutes, with the vertical maps being $G$-homotopy equivalences:
\[
\xymatrix{
\Omega X \ar[r] \ar[d]^{\simeq_G} & L_f^G \Omega X \ar[d]^{\simeq_G} \\
\Omega A \ar[r] & \Omega B.
}
\]
\end{prop}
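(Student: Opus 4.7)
The plan is to combine Theorem~\ref{CL} with the universal property of $L_f^G$ supplied by Theorem~\ref{Glo}. Take $A = X$ and $B = L_{\Sigma f}^G X$, let the bottom arrow be $\Omega j_X$, where $j_X \colon X \to L_{\Sigma f}^G X$ is the coaugmentation of the $\Sigma f$-localization, and let the left vertical arrow be the identity on $\Omega X$. Everything then reduces to constructing a right vertical $G$-homotopy equivalence making the square commute up to $G$-homotopy.

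First I would observe that $L_{\Sigma f}^G X$ remains $G$-connected, because the suspension coordinate of $\Sigma f$ carries trivial $G$-action and so the argument reduces orbit-wise to the classical non-equivariant fact. By part~(2) of the proposition immediately preceding this one, a $G$-connected space $Y$ is $\Sigma f$-local iff $\Omega Y$ is $f$-local, so $\Omega L_{\Sigma f}^G X$ is $f$-local. Consequently the $G$-map $\Omega j_X \colon \Omega X \to \Omega L_{\Sigma f}^G X$ has $f$-local target, and Theorem~\ref{Glo} then yields a factorization, unique up to $G$-homotopy,
\[
\Omega X \longrightarrow L_f^G \Omega X \xrightarrow{\;\phi\;} \Omega L_{\Sigma f}^G X,
\]
in which $\phi$ composed with the coaugmentation is $G$-homotopic to $\Omega j_X$. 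Taking the right vertical arrow to be this $\phi$ makes the diagram commute by construction.

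Finally, one has to verify that $\phi$ is a $G$-homotopy equivalence. This is the content of Theorem~\ref{CL}: any factorization of $\Omega j_X$ through the coaugmentation into an $f$-local target is determined up to $G$-homotopy and therefore agrees with the specific equivalence produced in Theorem~\ref{CL} via the equivariant Segal machine of Theorem~\ref{eqsem}. The main obstacle, as with many localization arguments, lies in identifying the abstract universal factorization with a concretely constructed equivalence. Here the identification is largely formal, since both maps are characterised by the same universal property, and the substantive work has already been carried out in proving Theorem~\ref{CL}.
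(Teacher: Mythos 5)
Your argument is circular relative to the paper's logical structure. You invoke Theorem~\ref{CL} (the commutation rule $L_f^G \Omega X \simeq_G \Omega L_{\Sigma f}^G X$) to conclude that your universal map $\phi\colon L_f^G\Omega X \to \Omega L_{\Sigma f}^G X$ is a $G$-homotopy equivalence. But in the paper Theorem~\ref{CL} is proved \emph{after} Proposition~\ref{CAL} and its proof depends on it in an essential way: the inverse map $j\colon \Omega L_{\Sigma f}^G X \to L_f^G\Omega X$ is obtained by first delooping $L_f^G\Omega X$ as $\bar{B}(L_f^G\Omega X) := S_G L_f^G \mathrm{A}_{\R}$, and the existence of that delooping is precisely the content of Proposition~\ref{CAL}. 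The first half of your argument (that $\Omega L_{\Sigma f}^G X$ is $f$-local, hence $\Omega j_X$ factors uniquely through the coaugmentation) is sound and reproduces the map $i$ from the paper's proof of Theorem~\ref{CL}; but the universal property only gives you \emph{a} map to \emph{one particular} $f$-local target, not that this map is an equivalence. Showing it is an equivalence is exactly as hard as proving Theorem~\ref{CL}, which cannot be done at this stage without an independent argument.

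The paper's actual proof supplies the missing substance directly: it packages $\Omega X$ into a special $\Gamma_{G}$-space $\mathrm{A}$ with $\mathrm{A}(\mathbf{n}) = (\Omega X)^n$, applies $L_f^G$ levelwise (using $L_f^G(X\times Y)\simeq_G L_f^G X\times L_f^G Y$ to preserve specialness), checks that $L_f^G\mathrm{A}(\mathbf{1})$ remains grouplike, and then runs Shimakawa's equivariant loop machine (Theorem~\ref{eqsem}) to exhibit both $\Omega X\simeq_G \Omega S_G\mathrm{A}_{\R}$ and $L_f^G\Omega X\simeq_G \Omega S_G L_f^G\mathrm{A}_{\R}$. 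The deloopings produced there are $A = S_G\mathrm{A}_{\R}$ and $B = S_G L_f^G\mathrm{A}_{\R}$, not $X$ and $L_{\Sigma f}^G X$. If you want to repair your approach you would need to prove the commutation rule by some route that does not pass through the loop-space recognition of $L_f^G\Omega X$; no such route is available in this paper.
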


\begin{proof}
The proof is essentially analogous to the non-equivariant case \cite{Fa96}. The main tool is Shimakawa's version of equivariant Segal's machinery to detect a loop space \cite{Se72, Sh89}. Observe that $(\Omega X)^H = \Omega(X^H)$ for every subgroup $H \leq G$. Define a $\Gamma_G$-space $\mbox{A}$ by:
\begin{align*}
&A(\textbf{0})=~~*\\ 
&\mbox{A}(\textbf{n})=(\Omega X)^n
\end{align*}
where the $G$-action on $(\Omega X)^n$ is coming from the action on the finite $G$-set $\textbf{n}$.
Since the adjoint map $P_n$ in Definition \ref{GamGs} is essentially given by the identity map $\Omega X \to \Omega X$, our $\Gamma_G$-space $\mbox{A}$ is special. Deploying the ``equivariant loop machine" in Theorem \ref{eqsem}, we have:
\[
\Omega X \simeq_G S_G \mbox{A}_0 \simeq_G \Omega S_G \mbox{A}_{\R}.
\]
Now applying the $L^G_f$ functor to the $\Gamma_G$-space $\mbox{A}$ will produce another special $\Gamma_G$-space $L^G_f \mbox{A}$. This is justified by the fact that:
\[
L_f^G(X \times Y) \simeq_G L_f^G X \times L_f^G Y.
\]
Now all we need is to find the ``group-like" structure in $L_f^G \mbox{A}(\textbf{1})$ to apply the ``equivariant loop machine" on $L_f^G \mbox{A}(\textbf{n})$. This follows immediately from the naturality of the $G$-homotopy equivalence:
\[
L_f^G(X \times X) \simeq_G L_f^G X \times L_f^G X,
\]
and from the fact that any $L_f^G$ will take connected components to connected components.

This completes the proof by giving the desired diagram:
\[
\xymatrix{
\Omega X \ar[r] \ar[d]^{\simeq_G} & L_f^G \Omega X \ar[d]^{\simeq_G} \\
\Omega S_G \mbox{A}_{\R} \ar[r] & \Omega S_G L_f^G \mbox{A}_{\R}.
}
\]
\end{proof}

It is to be observed that the above proof does not require a monoidal structure in $\mbox{A}(\textbf{1})$, unlike the non-equivariant case described in \cite{Fa96}, for its direct passage to the functorial Bar construction.

We will denote by $\bar{B}$ the delooping functor. For any $Y \in \text{Obj}(\GTop)$, $\bar{B} Y$ is the unique object in $\GTop$ with $Y \simeq_G \Omega(\bar{B} Y)$. In general, the delooping functor does not always exist.

\begin{theorem}
Let $X$ be a $G$-space. Then for any $G$-map $f: A \to B$:
\[
L_f^G \Omega X \simeq_G \Omega L_{\Sigma f}^G X.
\]
\end{theorem}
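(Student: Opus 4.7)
The plan is to extract a delooping $Y$ of $L_f^G\Omega X$ from the proof of Proposition \ref{CAL}, then identify $Y$ with $L_{\Sigma f}^G X$ via the universal property. Write $\mathbf{A}$ for the $\Gamma_G$-space of that proof, so $S_G\mathbf{A}_{\R}\simeq_G X$ (after replacing $X$ by its basepoint component, which is harmless as both sides of the theorem only see $\Omega X$). Applying $S_G(-)_{\R}$ to the coaugmentation of $\Gamma_G$-spaces $\mathbf{A}\to L_f^G\mathbf{A}$ produces a canonical $G$-map $g\colon X\to Y$, where $Y:=S_GL_f^G\mathbf{A}_{\R}$, and the argument of Proposition \ref{CAL} supplies a $G$-homotopy equivalence $\Omega Y\simeq_G L_f^G\Omega X$ compatible with the coaugmentation of $\Omega X$.

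Next I would verify $\Sigma f$-locality of $Y$. Since $L_f^G\Omega X$ is $f$-local by construction and $\Omega Y\simeq_G L_f^G\Omega X$, the loop space $\Omega Y$ is $f$-local. Because $Y$ is $G$-connected (it is the realization of a Bar construction in the first suspension coordinate), part (2) of the proposition preceding this theorem implies that $Y$ is $\Sigma f$-local. The universal property of $L_{\Sigma f}^G$ then factors $g$ through a $G$-map $\psi'\colon L_{\Sigma f}^G X\to Y$, and applying $\Omega$ gives
\[
\psi\colon \Omega L_{\Sigma f}^G X\longrightarrow \Omega Y\simeq_G L_f^G\Omega X.
\]

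In the opposite direction, since $L_{\Sigma f}^G X$ is $\Sigma f$-local, the same proposition tells us $\Omega L_{\Sigma f}^G X$ is $f$-local. The composite $\Omega X\to \Omega L_{\Sigma f}^G X$ of the loop of the coaugmentation then factors uniquely up to $G$-homotopy through $L_f^G\Omega X$, producing
\[
\phi\colon L_f^G\Omega X\longrightarrow \Omega L_{\Sigma f}^G X.
\]
To conclude, I would apply the divisibility part of the universal property from \cite{Ch05} to both $\psi\circ\phi$ and $\phi\circ\psi$: each is an endomorphism of an $f$-local object which agrees with the identity after precomposition with the relevant coaugmentation, so both are $G$-homotopic to identities.

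The main obstacle I anticipate is keeping the various comparison maps honestly compatible. Concretely, the identification $S_G\mathbf{A}_{\R}\simeq_G X$ only uses Shimakawa's machine (Theorem \ref{eqsem}), and the passage between $X\to Y$ and $\Omega X\to L_f^G\Omega X$ must commute up to $G$-homotopy with the coaugmentation on $\Gamma_G$-spaces; this requires naturality of $S_G(-)_{\R}$ and of the equivariant group-completion. A secondary nuisance is that the statement is sensitive to connectedness, so one must restrict to the basepoint component of $X$ in order to apply part (2) of the earlier proposition. Once these compatibilities are in place, the universal-property juggling above closes the argument.
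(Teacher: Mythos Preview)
Your proposal is correct and follows essentially the same route as the paper: construct the delooping $Y=S_G L_f^G\mathbf{A}_{\R}$ from Proposition~\ref{CAL}, use the universal properties of $L_f^G$ and $L_{\Sigma f}^G$ to produce maps in both directions, and conclude via divisibility that the composites are $G$-homotopic to identities. You supply more detail than the paper on why $Y$ is $\Sigma f$-local and on the compatibility of the various comparison maps, but the architecture of the argument is the same.
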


\begin{proof}
The proof is exactly analogous to the non-equivariant case as in \cite[\S A.4]{Fa96}. We need to produce a $G$-homotopy between $L_f^G \Omega X$ and $\Omega L_{\Sigma f}^G X$. Since $\Omega L_{\Sigma f}^G X$ is an $f$-local space by the universal property of $L_f^G$, we have a map:
\[
i: L_f^G \Omega X \to \Omega L_{\Sigma f}^G X.
\]
By Proposition \ref{CAL}, $\bar{B}(L_f^G \Omega X) := S_G L_f^G \mbox{A}_{\R}$. This gives a map:
\[
X \to \bar{B}(L_f^G \Omega X).
\]
Since $\bar{B}(L_f^G \Omega X)$ is $\Sigma f$-local, we have the following universal diagram:
\[
\xymatrix{
X \ar[r] \ar[d] & \bar{B}(L_f^G \Omega X) \\
L_{\Sigma f}^G X \ar@{-->}[ur].
}
\]
Applying the loop functor to the universal map, we have our desired map in the opposite direction:
\[
j: \Omega L_{\Sigma f}^G X \to L_f^G \Omega X.
\]
By the universal property, both compositions $i \circ j$ and $j \circ i$ are $G$-homotopic to the identity.
\end{proof}

\begin{theorem}\label{mapeilen}
Let $A$ be a finite $C_{p^n}$-CW complex. Suppose the integer-graded Bredon cohomology of $A$ vanishes above degree $r$. For $m > i > r$, we have:
\begin{equation}\label{eilen1}
\text{Map}_*(A, K(\underline{\Q}, m)) \simeq_{C_{p^n}} \prod_{i=m-r}^{m} K(\underline{M}^{m-i}, i),
\end{equation}
where $\underline{\Q}$ is the constant Mackey functor taking every $C_{p^n}$-set to $\Q$, and $\underline{M}^i$ is the $C_{p^n}$-coefficient system defined on by:
\[
\underline{M}^i(C_{p^n}/P) = H_P^{m-i}(A; \underline{\Q}).
\]
\end{theorem}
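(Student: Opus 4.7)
The plan is to compute the rational $C_{p^n}$-homotopy type of $\Map_*(A, K(\underline{\Q}, m))$ by first identifying its homotopy coefficient systems and then splitting its Postnikov tower as a product of equivariant Eilenberg--MacLane spaces. For each subgroup $P \leq C_{p^n}$, taking $P$-fixed points gives the space of $P$-equivariant based maps, and by the loop-suspension adjunction together with the definition of Bredon cohomology,
$$\pi_i \Map_*(A, K(\underline{\Q}, m))^P \;\cong\; [\Sigma^i A, K(\underline{\Q}, m)]^P_* \;\cong\; H^{m-i}_P(A; \underline{\Q}).$$
Letting $P$ vary identifies the homotopy coefficient system in degree $i$ with $\underline{M}^{m-i}$ as defined in the statement. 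The hypothesis that integer-graded Bredon cohomology vanishes above degree $r$ then forces $\underline{M}^{m-i} = 0$ whenever $m - i > r$, so the only potentially non-trivial homotopy coefficient systems live in the range $m - r \leq i \leq m$.

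Next I would split the Postnikov tower of $\Map_*(A, K(\underline{\Q}, m))$ rationally. The obstructions to splitting are equivariant $k$-invariants taking values in Bredon cohomology of equivariant Eilenberg--MacLane spaces with $\underline{\Q}$-coefficients, and these are controlled by higher $\operatorname{Ext}$-groups in the abelian category of rational $C_{p^n}$-coefficient systems. The key input is Triantafillou's structural theorem \cite[Theorem 1.2]{Tr83}, which for $G = C_{p^n}$ ensures that rational coefficient systems decompose into a sum of standard injective/projective pieces, so that all $k$-invariants of the rational $G$-Postnikov tower of our mapping space vanish. Combining this splitting with the identification of the homotopy coefficient systems above yields
$$\Map_*(A, K(\underline{\Q}, m)) \;\simeq_{C_{p^n}}\; \prod_{i=m-r}^{m} K(\underline{M}^{m-i}, i),$$
as claimed.

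The main obstacle is the Postnikov splitting step: one must verify that every equivariant $k$-invariant vanishes for the rational mapping space. This is precisely the structural content extracted from Triantafillou's theorem, and it also explains why the theorem is restricted to cyclic $p$-groups rather than arbitrary compact Lie groups, as flagged in the remark following the main theorem in the introduction. The numerical condition $m > i > r$ guarantees that we work in a stable range where both the Bredon cohomology identification of the mapping-space homotopy groups and the Postnikov splitting apply uniformly, with no unstable or boundary contributions to handle separately.
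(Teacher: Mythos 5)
Your overall architecture matches the paper's: identify the homotopy coefficient systems of $\Map_*(A, K(\underline{\Q}, m))$ via the adjunction $\pi_i^P \cong [A, \Omega^i K(\underline{\Q},m)]^P \cong H^{m-i}_P(A;\underline{\Q})$, use the vanishing hypothesis to confine them to the range $m-r \le i \le m$, and then invoke Triantafillou to split the space as a product of equivariant Eilenberg--MacLane spaces. The first step is correct and is exactly what the paper does.

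However, your justification of the splitting step has a genuine gap. You claim that all equivariant $k$-invariants vanish because ``rational coefficient systems decompose into a sum of standard injective/projective pieces.'' That is not what Triantafillou's Theorem 1.2 says, and it is not a valid general principle: a rational $G$-space (even a simply connected non-equivariant rational space, e.g.\ $S^2_{\Q}$) need not split as a product of Eilenberg--MacLane spaces, so injectivity of coefficient systems alone cannot kill the $k$-invariants. The cited theorem is about \emph{Hopf} $G$-spaces: the rationalization of a Hopf $G$-space is a product of equivariant Eilenberg--MacLane spaces. The missing observation --- and the actual crux of the paper's proof --- is that $\Map_*(A, K(\underline{\Q}, m)) \simeq \Omega \Map_*(A, K(\underline{\Q}, m+1))$, so the mapping space is a loop space, hence an H-space, and it is already rational because the target is; only then does Triantafillou's theorem apply. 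Your proof needs this H-space verification inserted before the splitting; once it is, the argument closes and coincides with the paper's. (Your closing remark that the condition $m > i > r$ sets up a ``stable range'' is also not the relevant point; the condition simply ensures the nonzero homotopy coefficient systems sit in degrees $\ge m-r$, so in particular the mapping space is $G$-connected and $G$-simply connected when $m > r+1$.)
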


The proof will follow from the following lemma:

\begin{lemma}\cite{HHR21}
For any Mackey functor $\underline{M}$ and each integer $n \geq 0$, there is a $G$-space $K(\underline{M}, n)$, the equivariant Eilenberg-Mac Lane space, such that:
\[
\pi^H_k K(\underline{M}, n) = 
\begin{cases}
\underline{M}(G/H) & \text{for } k = n, \\
0 & \text{otherwise}.
\end{cases}
\]
\end{lemma}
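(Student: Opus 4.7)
The plan is to construct $K(\underline{M}, n)$ by assembling a strict $\OG^{\op}$-diagram of ordinary Eilenberg--Mac Lane spaces and then passing through Elmendorf's equivalence, so that the identification on $H$-fixed points reduces the lemma to the classical non-equivariant calculation. Concretely, given the coefficient system $\underline{M}\colon\OG^{\op}\to\mathsf{Ab}$, I would first build the functor
\[
\underline{K}(\underline{M}, n)\colon\OG^{\op}\longrightarrow \Top,\qquad G/H\longmapsto K(\underline{M}(G/H), n),
\]
where $K(A, n)$ denotes the geometric realisation of the simplicial abelian group obtained from the chain complex $A[n]$ (concentrated in degree $n$) via the Dold--Kan correspondence. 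Applying the realisation functor of Elmendorf's Quillen equivalence (as invoked earlier in the excerpt) to $\underline{K}(\underline{M}, n)$ then produces a based $G$-CW complex, which I take as the definition of $K(\underline{M}, n)$.

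Next I would verify the stated homotopy formula. Under Elmendorf's equivalence, the $H$-fixed point space of the $G$-realisation of a diagram is naturally weakly equivalent to the value of the diagram at $G/H$; in our situation this gives
\[
K(\underline{M}, n)^H \;\simeq\; K(\underline{M}(G/H), n).
\]
Taking $\pi_k$ of both sides and invoking the classical non-equivariant computation $\pi_k K(A, n) = A$ for $k = n$ and $0$ otherwise yields
\[
\pi_k^H K(\underline{M}, n) \;=\; \pi_k K(\underline{M}(G/H), n) \;=\;
\begin{cases}
\underline{M}(G/H), & k = n,\\
0, & k \neq n,
\end{cases}
\]
as required.

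The main technical point is arranging strict functoriality of the diagram $\underline{K}(\underline{M}, n)$. If one merely chose a representative Eilenberg--Mac Lane space for each abelian group $\underline{M}(G/H)$ independently, a morphism in $\OG$ would induce a map of Eilenberg--Mac Lane spaces only up to homotopy, and one would obtain at best a homotopy-coherent diagram rather than an honest functor, which cannot be fed directly into Elmendorf's realisation. Routing the construction through Dold--Kan repairs this, because both the Dold--Kan nerve and geometric realisation are strictly functorial, so $A\mapsto K(A, n)$ becomes a genuine functor from abelian groups to $\Top$. Once that is in place, the remaining ingredients --- Elmendorf's theorem, the identification of fixed points with the diagram's value, and the classical computation of $\pi_\ast K(A, n)$ --- are standard, and the $n=0$ case drops out automatically as the discrete diagram $G/H\mapsto \underline{M}(G/H)$.
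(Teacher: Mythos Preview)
Your proposal is correct and gives a clean, self-contained construction. The paper itself does not prove this lemma at all: it is stated with a citation to \cite{HHR21} and used as a black box in the subsequent proof of Theorem~\ref{mapeilen}. So rather than matching or diverging from the paper's argument, you have supplied a proof where the paper simply imports the result from the literature.

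Your route via Dold--Kan plus Elmendorf's theorem is one of the standard constructions, and your attention to strict functoriality is exactly the point that needs care. One small remark: for Elmendorf's coalescence functor to recover the diagram values on fixed points up to weak equivalence, one should either use Elmendorf's original bar-construction model (which works for arbitrary diagrams) or take a projective-cofibrant replacement of $\underline{K}(\underline{M},n)$ before applying the left Quillen functor; the Dold--Kan model you build is objectwise a CW complex, so this causes no difficulty, but it is worth saying explicitly. With that caveat the argument is complete.
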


\begin{proof}[Proof of Theorem \ref{mapeilen}]
It is known that the rationalization of every Hopf $G$-space is a product of equivariant Eilenberg-Mac Lane spaces up to $G$-homotopy \cite{Tr83}. As:
\[
\text{Map}_*(A, K(\underline{\Q}, m)) \simeq \Omega \text{Map}_*(A, K(\underline{\Q}, m+1)),
\]
the left-hand side of \eqref{eilen1} in Theorem \ref{mapeilen} is an H-space. It is also a rational space since we are considering the constant rational Mackey functor $\underline{\Q}$. Thus, we have:
\[
\text{Map}_*(A, K(\underline{\Q}, m)) \simeq_{C_{p^n}} \prod_{i=0}^N K(\underline{M}^{i_*}, i).
\]
Now, applying the functor $\pi^P_i$ to both sides for any subgroup $P \subset C_{p^n}$, we have:
\[
\underline{M}^{i_*}(C_{p^n}/P) = [S^i, \text{Map}_*(A, K(\underline{\Q}, m))^P] = [A, \Omega^i K(\underline{\Q}, m)]^P = H_P^{m-i}(A, \underline{\Q}).
\]
Rearranging the indices and replacing $i_*$ with $m - i$, we obtain the desired result.
\end{proof}

\begin{prop}\label{chl}
Let $A$ be a finite, connected $C_{p^n}$-CW complex such that for some $r > s > 0$, $H^r_{C_{p^n}}(A; \underline{\Q}) \neq 0 \neq H^s_{C_{p^n}}(A; \underline{\Q})$. Then $A$ is not an $L^{C_{p^n}}$-good space.
\end{prop}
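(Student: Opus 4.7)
The plan is to assume for contradiction that $A$ is $L^{C_{p^n}}$-good, then localise the mapping space $\mbox{Map}_*(A, K(\underline{\Q}, m))$ in a way that produces something inconsistent with any $\mbox{Map}_*(A, Y)$.

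Pick an integer $m$ much larger than $r$. Theorem \ref{mapeilen} with $X = K(\underline{\Q}, m)$ gives
\[
\mbox{Map}_*(A, K(\underline{\Q}, m)) \simeq_{C_{p^n}} \prod_{i = m - r}^{m} K(\underline{M}^{m-i}, i),
\]
and both factors at $i = m - r$ and $i = m - s$ are non-trivial by hypothesis. Next take $f : S^{m-s} \to \ast$, with trivial $C_{p^n}$-action on the sphere. Because $L^{C_{p^n}}_f$ commutes with finite products (established earlier in the paper) and nullifies each equivariant Eilenberg-Mac Lane space of dimension at least $m-s$ while fixing those of smaller dimension,
\[
L^{C_{p^n}}_f \mbox{Map}_*(A, K(\underline{\Q}, m)) \simeq_{C_{p^n}} \prod_{i = m - r}^{m - s - 1} K(\underline{M}^{m-i}, i),
\]
still non-trivial because $K(\underline{M}^r, m - r)$ survives. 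If $A$ were $L^{C_{p^n}}$-good, this would equal $\mbox{Map}_*(A, Y)$ for some $G$-simply connected $Y$, and the rationality of the left-hand side lets us replace $Y$ by its rationalisation.

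Reading homotopy groups off the display, $[\Sigma^k A, Y]^P_\ast = H^{m-k}_P(A; \underline{\Q})$ for $k \in [m - r,\, m - s - 1]$ and vanishes otherwise. On the other hand, using the Postnikov tower of the rational $G$-simply connected space $Y$, one has the equivariant Federer spectral sequence $E_2^{p,q} = H^p_{C_{p^n}}(A;\, \pi_q Y) \Rightarrow \pi_{q - p} \mbox{Map}_*(A, Y)$, which collapses rationally. For any $q$ with $\pi_q Y \neq 0$, the assumed non-vanishing of $H^r$ and $H^s$ of $A$ (propagated to coefficients in the non-zero rational Mackey functor $\pi_q Y$ via the semisimplicity of rational $C_{p^n}$-Mackey functors) yields non-trivial contributions to $\pi_{q - r}$ and $\pi_{q - s}$ of the mapping space. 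Both are required to lie in $[m - r, m - s - 1]$, forcing $q \geq m$ and $q \leq m - 1$ simultaneously, which is impossible. Therefore $\pi_q Y = 0$ for every $q$, so $Y$ is weakly contractible and $\mbox{Map}_*(A, Y) \simeq \ast$, contradicting the non-triviality already established.

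The main technical obstacle is the propagation of cohomological non-vanishing alluded to above: from $H^r_{C_{p^n}}(A; \underline{\Q}) \neq 0 \neq H^s_{C_{p^n}}(A; \underline{\Q})$ one needs both $H^r_{C_{p^n}}(A; \pi_q Y) \neq 0$ and $H^s_{C_{p^n}}(A; \pi_q Y) \neq 0$ whenever $\pi_q Y$ is a non-zero rational Mackey functor. This is handled by decomposing $\pi_q Y$ into simple summands according to the block decomposition of rational $C_{p^n}$-Mackey functors, indexed by conjugacy classes of subgroups, and verifying that at least one simple summand must contribute cohomologically in both degrees $r$ and $s$.
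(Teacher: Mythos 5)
Your overall strategy is the paper's: nullify a sphere so that $\Map_*(A,K(\underline{\Q},m))$ truncates to something non-trivial, invoke goodness to rewrite the result as $\Map_*(A,Y)$ with $Y$ rational, decompose the target into equivariant Eilenberg--Mac\,Lane pieces, and derive a contradiction from cohomology in two positive degrees. Your choice $f\colon S^{m-s}\to \ast$ with trivial action is, if anything, cleaner than the paper's representation sphere. But there are two genuine gaps. First, the assertion that the equivariant Federer spectral sequence ``collapses rationally'' for an arbitrary rational $G$-simply connected $Y$ is false: already non-equivariantly $\pi_2\Map_*(S^3,S^2_{\Q})=\pi_5(S^2)\otimes\Q=0$, while the $E_2$-entry $H^0(S^3;\pi_2S^2\otimes\Q)=\Q$ survives in your formula, so the collapse fails unless the target is an H-space. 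Triantafillou's splitting into a product of $K(\underline{N}^i,i)$ applies to Hopf $G$-spaces, not to all rational $G$-spaces; this is precisely why the paper loops once, applies \cite[Theorem 1.2]{Tr83} to $\Omega Y$, and works with $\Omega\Map_*(A,Y)=\Map_*(A,\Omega Y)$. Your argument needs the same move; it costs nothing beyond shifting the window $[m-r,\,m-s-1]$ by one.

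Second, the step you flag as the ``main technical obstacle'' is a real gap, not a deferred routine verification. You need that for every non-zero rational coefficient system $\underline{N}$ occurring as a homotopy group of (the loops on) $Y$, both $H^r_{C_{p^n}}(A;\underline{N})\neq 0$ and $H^s_{C_{p^n}}(A;\underline{N})\neq 0$. Your proposed fix --- that some simple summand of $\underline{\Q}$ must contribute in both degrees $r$ and $s$ --- does not follow from the hypothesis: the simple constituents of $\underline{\Q}$ witnessing non-vanishing in degree $r$ and in degree $s$ may be different, and $\underline{N}$ may be built from only one of them. (For instance, for a space with trivial action, $H^*_{G}(-;\underline{N})$ sees only $\underline{N}(G/G)$, which can vanish while $\underline{N}\neq 0$; so non-vanishing with $\underline{\Q}$-coefficients does not propagate to arbitrary non-zero rational coefficients.) To be fair, the paper's own proof asserts $H^r_{C_{p^n}}(A;\underline{N}^{n_0})\neq 0\neq H^s_{C_{p^n}}(A;\underline{N}^{n_0})$ at the corresponding point with no more justification than you give, so you have correctly located the crux of the argument; but neither your sketch nor the block-decomposition remark you append actually closes it.
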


\begin{proof}
For $m > r + p^n$, consider the $C_{p^n}$-equivariant map $f: S^{\rho} \to *$, where $\rho = \rho_{C_{p^n}} \oplus (m - r - p^n)\epsilon$ is the $(m - r)$-dimensional $C_{p^n}$-representation. Here, $\rho_{C_{p^n}}$ denotes the $C_{p^n}$-regular representation, and $\epsilon$ denotes the one-dimensional trivial representation. Observe that:
\[
L^{C_{p^n}}_f(\text{Map}_*(A, K(\underline{\Q}, m))) 
\]
is just the $C_{p^n}$-equivariant Postnikov section $P^{C_{p^n}}_{m - r + 1} \text{Map}_*(A, K(\underline{\Q}, m))$. By Theorem \ref{mapeilen}, this implies:
\[
L^{C_{p^n}}_f \text{Map}_*(A, K(\underline{\Q}, m))) \simeq_{C_{p^n}} K(\underline{M}^r, m - r).
\]
If $A$ is an $L^G$-good space, then for every $N > 0$, we can find a $C_{p^n}$-space $Y$ such that:
\[
L^{C_{p^n}}_f \text{Map}_*(A, K(\underline{\Q}, m))) \simeq_{C_{p^n}} \text{Map}_*(A, Y) \simeq_{C_{p^n}} K(\underline{M}^r, N).
\]

Let us first assume $A$ is $L^{C_{p^n}}$-good, $C_{p^n}$-simply connected with $H^r_{C_{p^n}}(A; \underline{\Q}) \neq 0$, and for $N > r + 1$, such a $Y$ exists with $Y$ being a $C_{p^n}$-simply connected space. Since $K(\underline{M}^r, N)$ is a rational space, we can assume $Y$ is $C_{p^n}$-rational because of the equivalence:
\[
\text{Map}_*(A, Y) \simeq_{C_{p^n}} \text{Map}_*(A, Y_0),
\]
where $Y_0$ is a $C_{p^n}$-rational space. By \cite[Theorem 1.2]{Tr83}, we have:
\[
\Omega Y \simeq_{C_{p^n}} \prod_{n \geq 1} K(\underline{N}^i; i),
\]
where $\underline{N}^i$ is a suitable Mackey functor taking values in rational vector spaces. Now we have the following equivalences:
\begin{equation}\label{4equiv}
\Omega \text{Map}_*(A, Y) \simeq_{C_{p^n}} K(\underline{M}^r; N - 1) \simeq_{C_{p^n}} \text{Map}_*(A, \Omega Y) \simeq_{C_{p^n}} \text{Map}_*(A, \prod_{n \geq 1} K(\underline{N}^i; i)).
\end{equation}
There should be $n_0 \geq N - 1$ such that $\underline{N}^{n_0}$ is a non-zero Mackey functor. Otherwise, if all $\underline{N}^i$ are zero for $i \geq N - 1$, we have the following $C_{p^n}$-homeomorphism:
\[
\text{Map}_*(A, \prod_{i} K(\underline{N}^i; i)) = \prod_{i=1}^{N - 2} \text{Map}_*(A, K(\underline{N}^i; i)).
\]
This implies:
\[
\pi^P_i \text{Map}_*(A, \Omega Y) = \oplus_{i=1}^{N - 2} H^{N - i}_{C_{p^n}}(A; \underline{N}^i),
\]
which contradicts the first equivalence of \eqref{4equiv}. More specifically, $\Omega \text{Map}_*(A, Y)$ has two non-trivial groups in dimensions $n_0 - p$ and $n_0 - q$ as:
\[
\pi^P_{n_0 - p} \text{Map}_*(A, \Omega Y) = H^p_{C_{p^n}}(A, \underline{N}^{n_0}) \neq 0,
\]
and
\[
\pi^P_{n_0 - q} \text{Map}_*(A, \Omega Y) \simeq_{C_{p^n}} H^q_{C_{p^n}}(A, \underline{N}^{n_0}) \neq 0.
\]
This again contradicts the first equivalence of \eqref{4equiv} and completes the proof.
\end{proof}

\end{document}